\numberwithin{equation}{section}
\newtheorem{theorem}{Theorem}[section]
\newtheorem{proposition}[theorem]{Proposition}
\newtheorem{lemma}[theorem]{Lemma}
\theoremstyle{definition}
\newtheorem{definition}[theorem]{Definition}
\newtheorem{remark}[theorem]{Remark}
\begin{document}

\baselineskip=15pt

\title[Parabolic bundles and Lie algebroid connections]{Parabolic vector bundles and Lie 
algebroid connections}

\author[D. Alfaya]{David Alfaya}

\address{Department of Applied Mathematics and Institute for Research in Technology, ICAI 
School of Engineering, Comillas Pontifical University, C/Alberto Aguilera 25, 28015 Madrid, 
Spain}

\email{dalfaya@comillas.edu}

\author[I. Biswas]{Indranil Biswas}

\address{Department of Mathematics, Shiv Nadar University, NH91, Tehsil Dadri,
Greater Noida, Uttar Pradesh 201314, India}

\email{indranil.biswas@snu.edu.in, indranil29@gmail.com}

\author[P. Kumar]{Pradip Kumar}

\address{Department of Mathematics, Shiv Nadar University, NH91, Tehsil Dadri,
Greater Noida, Uttar Pradesh 201314, India}

\email{Pradip.Kumar@snu.edu.in}

\author[A. Singh]{Anoop Singh}

\address{Department of Mathematical Sciences, Indian Institute of Technology (BHU), Varanasi 
221005, India}

\email{anoopsingh.mat@iitbhu.ac.in}

\subjclass[2010]{14H60, 53B15, 70G45}

\keywords{Parabolic bundle, Lie algebroid connection, Atiyah exact sequence, stable bundle}

\date{}

\begin{abstract}
Given a holomorphic Lie algebroid on an $m$--pointed connected Riemann surface, we define 
parabolic Lie algebroid connections on any parabolic vector bundle equipped with parabolic 
structure over the marked points. An analogue of the Atiyah exact sequence for parabolic Lie 
algebroids is constructed. For any Lie algebroid whose underlying holomorphic vector bundle is 
stable, we give a complete characterization of all the parabolic vector bundles that admit a 
parabolic Lie algebroid connection.
\end{abstract}

\maketitle

\section{Introduction}

Let $X$ be a compact connected Riemann surface and $E$ a holomorphic vector bundle over $X$. A 
Lie algebroid on $X$ is a locally free coherent analytic sheaf $V$ on $X$ equipped with a Lie 
algebra structure together with a holomorphic homomorphism of vector bundles $\phi\, :\, V\, 
\longrightarrow\, TX$ that intertwines the Lie algebra structure with the Lie bracket 
operation on $TX$. The notion of holomorphic connections on a holomorphic vector bundle $E$ on 
$X$ extends to holomorphic Lie algebroid connections on $E$. In the definition of holomorphic 
Lie algebroid connections $TX$ is replaced by $V$; the homomorphism $\phi$ is used in 
formulating the Leibniz identity.

Lie algebroid connections are quite similar to Simpson's notion of $\Lambda$--modules 
\cite{Si3}, \cite{To1}, \cite{To2}. In fact, Lie algebroid connections simultaneously 
generalize a number of geometric structures that appear in differential geometry, algebraic 
geometry and foliations. It also appears in mathematical physics \cite{CM}, \cite{LM}. Some 
examples of topics where Lie algebroid connections appear: Higgs bundles \cite{Hi},
\cite{Si2}, \cite{Si3}; twisted Higgs bundles \cite{Ni}; flat connections \cite{Si2}; 
logarithmic or meromorphic connections \cite{De}, \cite{Ni2}, \cite{Bo}, \cite{BS};
foliations \cite{ELW}, \cite{PW}, \cite{PP} (see also the references in these works).

In \cite{BKS}, a criterion for the existence of a Lie algebroid connection on $E$ was given 
under the assumption that the vector bundle $V$ is stable.

Parabolic vector bundles are generalizations of holomorphic vector bundles.
Parabolic vector bundles on Riemann surfaces were introduced by
Mehta and Seshadri in \cite{MS}, and their definition was further developed by Maruyama and
Yokogawa to higher dimensional projective varieties \cite{MY}. In \cite{MS} it was shown that
the polystable parabolic vector bundles of rank $r$ and parabolic degree zero,
with parabolic structure over $S$, on a compact Riemann surface $X$ correspond to the
representations of the fundamental group of the complement $X\setminus S$ into
${\rm U}(r)$. More generally, polystable parabolic Higgs bundles of rank $r$ and
parabolic degree zero on $X$, with parabolic structure over $S$, correspond to the
completely reducible representations of the fundamental group of the complement
$X\setminus S$ into $\text{GL}(r,{\mathbb C})$ \cite{Si1}.

In this context, it is natural to ask whether Lie algebroid connections extend to the
set-up of parabolic vector bundles. Our aim here is to address this question.

Let $S \,=\, \{x_1,\, \cdots,\, x_m\}\, \subset\, X$ be a nonempty finite subset, whose 
elements will be called the parabolic points. Let $E_*$ be a parabolic vector bundle over $X$ 
with parabolic structure over $S$ (see Section \ref{Para} for the definition). A connection on 
a parabolic vector bundle $E_*$ is a logarithmic connection on the underlying holomorphic 
vector bundle $E$ which is singular over $S$ such that the residue of the connection over any 
$x\, \in\, S$ is compatible with the parabolic structure of $E_*$ over $x$.

Every parabolic vector bundle can be expressed as a direct sum of indecomposable parabolic
vector bundles. A parabolic vector bundle $E_*$ admits a connection if and only if the parabolic degree
of every parabolic vector bundle which is a direct summand of $E_*$ is zero \cite[Theorem 1.1]{BL}.

Given a Lie algebroid $(V,\, \phi)$ and a parabolic vector
bundle $E_*$, we define parabolic Lie algebroid connections on $E_*$ (see Definition
\ref{def:paraLieconn}). If $V\,=\, TX\otimes {\mathcal O}_X(-S)$ and $\phi$ is the natural
inclusion map of $TX\otimes {\mathcal O}_X(-S)$ in $TX$, then a parabolic Lie algebroid connection
on $E_*$ is a usual connection on the parabolic vector bundle.
After having the definition of a parabolic Lie algebroid connection we address the question of
giving a criterion for the existence of a parabolic Lie algebroid connection on a parabolic vector bundle.

As mentioned before, the definition of a connection on a parabolic vector bundle $E_*$
uses the residue, over the points of $S$, of a logarithmic connection. Given a Lie algebroid
$(V,\, \phi)$, for any Lie algebroid connection $D$ on a holomorphic vector bundle $F$ on $X$,
we construct a $\mathbb C$--linear homomorphism
$$
{\mathcal S}_x\, :\, F_x\, \longrightarrow\, F_x\otimes {\rm cokerel}(\phi)_x\,=\,
F_x\otimes {\mathcal Q}_x
$$
for every $x\, \in\, X$ (see \eqref{e4}). This homomorphism is a generalization of the notion
of residue of a logarithmic connection.

Before defining a parabolic Lie algebroid connection, we introduce the weaker notion
of a quasi-parabolic Lie algebroid connection.
We prove the following (see Proposition \ref{prop1}):

\begin{proposition}\label{prop0.1}
Let $(V,\, \phi)$ be a Lie algebroid and $E_*$ a parabolic vector bundle.
A Lie algebroid connection $D\,:\, E\,\longrightarrow \, E\otimes V^*$ on the vector bundle $E$
underlying $E_*$ gives a quasi-parabolic Lie algebroid connection on the
parabolic vector bundle $E_*$ if and only if the following two statements hold:
\begin{enumerate}
\item For every $x\, \in\, S$, the homomorphism ${\mathcal S}_x$ preserves
the quasi-parabolic filtration of $E_*$ over $x$.

\item For all $x\, \in\, S$, the homomorphism of fibers $\phi^*_x\, :\, (K_X)_x\, \longrightarrow\, V^*_x$
is the zero map.
\end{enumerate}
\end{proposition}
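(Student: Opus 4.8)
The plan is to reduce everything to a local statement near each parabolic point $x\in S$ and to read off conditions (1) and (2) from the defining property of a quasi-parabolic Lie algebroid connection together with the explicit shape of the generalized residue $\mathcal S_x$ of \eqref{e4}. First I would fix a holomorphic coordinate $z$ centered at $x$, a local holomorphic frame $e_1,\dots,e_r$ of $V$ and the frame $\partial_z$ of $TX$, and write $\phi(e_i)=a_i(z)\,\partial_z$, so that the dual anchor is $\phi^*(dz)=\sum_i a_i(z)\,e_i^*$ and the Leibniz rule reads $D(f\,s)=f'(z)\,s\otimes\phi^*(dz)+f\,Ds$ for a local function $f$ and a local section $s$ of $E$. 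Since $TX$ is a line bundle, at each point the anchor is either surjective or zero; I would record the elementary but crucial dichotomy that $\mathcal Q_x=\mathrm{coker}(\phi)_x\neq 0$ exactly when every $a_i$ vanishes at $x$, i.e.\ exactly when $\phi^*_x\colon(K_X)_x\to V^*_x$ is the zero map. Thus condition (2) at $x$ is equivalent to $\mathcal Q_x\neq 0$, which is precisely the regime in which $\mathcal S_x$ is a nonzero, well-defined fiber homomorphism.

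For the forward implication I would assume $D$ defines a quasi-parabolic Lie algebroid connection on $E_*$ and extract the two conditions from the definition. The filtration-preservation statement (1) is the direct translation of the compatibility of $D$ with the quasiparabolic flag, obtained by evaluating the defining condition on the parabolic subsheaves at $x$ and passing to fibers; here I would use that $\mathcal S_x$ is, by \eqref{e4}, exactly the fiberwise obstruction to $D$ preserving a subbundle through $x$, so that \emph{$D$ respects the flag} becomes \emph{$\mathcal S_x$ maps each step of the flag into itself tensored with $\mathcal Q_x$}. For (2), the point is that a nontrivial quasiparabolic structure at $x$ can only be detected by $D$ if the connection is genuinely singular there, i.e.\ if the anchor degenerates at $x$; concretely, if some $a_i(x)\neq 0$ then the Leibniz term $f'(z)\,s\otimes\phi^*(dz)$ does not vanish on the fiber, the generalized residue collapses ($\mathcal Q_x=0$), and $D$ cannot satisfy the defining compatibility with a nontrivial flag. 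This forces $\phi^*_x=0$, which is (2).

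For the converse I would assume (1) and (2) and verify the definition. Condition (2) guarantees $\phi^*_x=0$, so all $a_i$ vanish at $x$ and the Leibniz term $f'(z)\,s\otimes\phi^*(dz)$ lies in $\mathfrak m_x\cdot(E\otimes V^*)$; hence the fiberwise reduction of $D$ is $\mathbb C$-linear and produces exactly the homomorphism $\mathcal S_x\colon E_x\to E_x\otimes\mathcal Q_x$ of \eqref{e4}. Condition (1) then says that $\mathcal S_x$ preserves the flag, which is the compatibility required by the definition, so $D$ is a quasi-parabolic Lie algebroid connection. I expect the main obstacle to be the careful identification, via \eqref{e4}, of the fiberwise reduction of $D$ with the $\mathcal Q_x$-valued residue and the verification that condition (2) is exactly what controls the Leibniz term; once the local normal form above is in place, both implications reduce to tracking this single term together with the dichotomy for $\mathcal Q_x$, and the remaining steps are routine.
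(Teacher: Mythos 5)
Your converse direction and your extraction of condition (1) are essentially the paper's own argument: under (2) the quotient map $V^*_x\,\longrightarrow\,\mathcal{Q}_x$ is an isomorphism, so ${\mathcal S}_x$ is just the fiberwise evaluation of $D$, and preservation of the subsheaves ${\mathcal E}_{x,i}$ translates directly into preservation of the filtration by ${\mathcal S}_x$. Those parts are fine.

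The gap is in your forward derivation of (2). You rest it on the ``dichotomy'' that $\mathcal{Q}_x\,\neq\,0$ exactly when $\phi^*_x\,=\,0$, and on the assertion that when $\phi^*_x\,\neq\,0$ the residue ``collapses'' so $D$ ``cannot satisfy the defining compatibility with a nontrivial flag.'' The dichotomy is false whenever ${\rm rank}(V)\,\geq\,2$: the subspace $\phi^*((K_X)_x)\,\subset\,V^*_x$ is at most one-dimensional, so $\mathcal{Q}_x$ has dimension at least ${\rm rank}(V)-1\,>\,0$ at every point, whether or not $\phi^*_x$ vanishes. Your argument therefore only speaks to the line-bundle case, and even there the step from ``$\mathcal{Q}_x=0$'' to ``$D$ cannot be compatible with a nontrivial flag'' is an unproved assertion --- note that (2) must be forced even when the flag is trivial ($\ell_x=1$). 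The mechanism that actually forces (2) is the one you gesture at (the Leibniz term) but do not carry out: the definition requires $D$ to preserve every ${\mathcal E}_{x,i}$, in particular ${\mathcal E}_{x,\ell_x+1}\,=\,E\otimes{\mathcal O}_X(-x)$. Take $v\,\in\,E_x\setminus E^i_x$ (always possible for $i=\ell_x+1$), a section $s$ with $s(x)=v$, and $f$ with $f(x)=0$ and $(df)(x)\neq 0$; then $f\cdot s$ is a section of ${\mathcal E}_{x,i}$ and $D(f\cdot s)(x)\,=\,s(x)\otimes\phi^*(df)(x)$, which can lie in $E^i_x\otimes V^*_x$ (the image of $({\mathcal E}_{x,i}\otimes V^*)_x$ in $(E\otimes V^*)_x$) only if $\phi^*(df)(x)=0$, i.e.\ $\phi^*_x=0$. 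This evaluation-modulo-the-flag computation is exactly the content of Lemma \ref{lem1} and of \eqref{e11}--\eqref{e8} in the paper, and it is what your proof needs in place of the (false) vanishing of $\mathcal{Q}_x$.
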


A parabolic Lie algebroid connection on $E_*$ is a quasi-parabolic Lie algebroid connection on
$E_*$ such that the action of ${\mathcal S}_x$ on each graded piece of the quasi-parabolic filtration
of $E_*$ over $x$ is given by the parabolic weight of that graded piece; see Definition \ref{def:paraLieconn}.

Next, we develop a parabolic analog of the Atiyah exact sequence
in the set up of Lie algebroids. First we introduce the notion of generalized parabolic Lie
algebroid connections, and the sheaf $\mathcal{C}_{E_*,V}$ of generalized parabolic Lie algebroid 
connections on $E_*$, which is a holomorphic vector bundle over $X$, fits into a short exact sequence (see 
\eqref{Atiya_para_Lie}). A parabolic Lie algebroid connection on $E_*$ is precisely a holomorphic 
splitting of \eqref{Atiya_para_Lie}.

Let $(V,\, \phi)$ be a Lie algebroid such that the homomorphism $\phi$ vanishes over $S$. Then the
dual homomorphism $\phi^*\, :\, K_X\, \longrightarrow\,V^*$ also vanishes over $S$, and hence it
produces a homomorphism
$$
\widetilde{\phi}^* \,\, : \,\, K_X \otimes {\mathcal O}_X(S) \,\,\longrightarrow\,\, V^*.
$$
In Section \ref{Crit_para_Lie_conn}, we give a criterion for the existence of parabolic Lie algebroid 
connections on a parabolic vector bundle. The following is proved (see Theorem \ref{thm1}):

\begin{theorem}\label{thm0.1}
Let $(V,\, \phi)$ be a Lie algebroid such that $V$ is stable and the homomorphism
$\phi$ vanishes over $S$. Let $E_*$ be a parabolic vector bundle on $X$. Then the following two
statements hold:
\begin{enumerate}
\item If the above homomorphism $\widetilde{\phi}^*$ is not an isomorphism, then $E_*$ admits a parabolic
Lie algebroid connection.

\item If $\widetilde{\phi}^*$ is an isomorphism, then $E_*$ admits a parabolic Lie algebroid connection
if and only if the parabolic degree of every parabolic direct summand of $E_*$ is zero.
\end{enumerate}
\end{theorem}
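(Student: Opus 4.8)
The plan is to read off the existence of a parabolic Lie algebroid connection from the splitting criterion already attached to the sequence \eqref{Atiya_para_Lie}: a parabolic Lie algebroid connection on $E_*$ is exactly a holomorphic splitting of that short exact sequence, so the whole question becomes the vanishing of its extension class, which I will call the parabolic Atiyah class $\theta(E_*)$ of $E_*$ relative to $(V,\phi)$. The first structural observation I would record is that \eqref{Atiya_para_Lie} is obtained from the ordinary parabolic Atiyah sequence (the one whose splittings are the usual parabolic connections, valued in $K_X\otimes\mathcal{O}_X(S)$) by pushing it out along the morphism induced by $\widetilde{\phi}^*$. Concretely, if $\nabla\colon E\to E\otimes K_X\otimes\mathcal{O}_X(S)$ is a parabolic connection then $(\mathrm{id}_E\otimes\widetilde{\phi}^*)\circ\nabla\colon E\to E\otimes V^*$ is a parabolic Lie algebroid connection, because $\widetilde{\phi}^*$ restricts to $\phi^*$ on $K_X$ and so the Leibniz rule is preserved; this furnishes a map of short exact sequences inducing $\widetilde{\phi}^*$ on the sub-bundles, so that $\theta(E_*)$ is the image under $\widetilde{\phi}^*_*$ of the ordinary parabolic Atiyah class $\alpha$ of $E_*$.

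This reduces both statements to understanding the pushforward map on the relevant $H^1$. For statement (2), when $\widetilde{\phi}^*$ is an isomorphism I would note that it identifies $V^*$ with $K_X\otimes\mathcal{O}_X(S)$, hence identifies $(V,\phi)$ with $(TX\otimes\mathcal{O}_X(-S),\iota)$ where $\iota$ is the natural inclusion; under this identification a parabolic Lie algebroid connection is precisely a usual connection on the parabolic vector bundle, as already remarked in the introduction. Statement (2) is then exactly \cite[Theorem 1.1]{BL}. Equivalently, since $\widetilde{\phi}^*$ being an isomorphism makes $\widetilde{\phi}^*_*$ an isomorphism on cohomology, $\theta(E_*)$ vanishes if and only if $\alpha$ does, which is the same criterion.

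For statement (1) the goal is the unconditional vanishing $\theta(E_*)=0$. First I would reduce to $E_*$ indecomposable: a direct sum of parabolic Lie algebroid connections on the indecomposable summands is one on $E_*$, so it suffices to produce a connection on each indecomposable parabolic bundle. For $E_*$ indecomposable I would apply Serre duality to the identity $\theta(E_*)=\widetilde{\phi}^*_*(\alpha)$; the Serre adjoint of $\widetilde{\phi}^*_*$ is contraction against $\psi:=(\widetilde{\phi}^*)^\vee\colon V\to TX\otimes\mathcal{O}_X(-S)$, the factorization of the anchor through the points of $S$. Pairing $\theta(E_*)$ with a dual section therefore factors through this contraction, and by the parabolic analogue of the Atiyah--Weil pairing the only direction that can detect a nonzero parabolic degree of the indecomposable $E_*$ is the scalar (identity) direction. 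The crux is to show that, precisely when $\widetilde{\phi}^*$ fails to be an isomorphism, this scalar direction is not in the image of contraction by $\psi$, so the pairing vanishes identically and hence $\theta(E_*)=0$ irrespective of the parabolic degree.

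I expect this last step to be the main obstacle, and it is where stability of $V$ and the hypothesis $S\neq\emptyset$ enter. When $\widetilde{\phi}^*$ is not an isomorphism, $V^*$ is strictly larger than $K_X\otimes\mathcal{O}_X(S)$---either because $\mathrm{rank}\,V\geq 2$, or because $V$ is a line bundle with $V^*\cong K_X\otimes\mathcal{O}_X(S+D)$ for some nonzero effective divisor $D$---so the generalized residue ${\mathcal S}_x$ of \eqref{e4} acquires extra freedom, namely extra polar directions away from $S$ or extra rank, that absorbs the global residue constraint responsible for the parabolic-degree condition. I would make this quantitative by a slope computation: stability of $V$ controls $\mathrm{Hom}(V,TX\otimes\mathcal{O}_X(-S))$ and the associated cohomology, forcing the scalar component of the contraction to vanish. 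Assembling these, the contraction misses the identity direction exactly in case (1), which yields $\theta(E_*)=0$ and hence a parabolic Lie algebroid connection on every $E_*$.
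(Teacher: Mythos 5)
Your reduction of the problem to the vanishing of the extension class of \eqref{Atiya_para_Lie}, your identification of that class as the image of the ordinary parabolic Atiyah class under the pushout along $\widetilde{\phi}^*$, and your treatment of statement (2) as exactly \cite{BL} all agree with the paper (the last point is literally how Proposition \ref{prop3}(2) is proved). The genuine gap is in statement (1): the entire content of the theorem is concentrated in your sentence ``the crux is to show that\dots this scalar direction is not in the image of contraction by $\psi$,'' and you do not prove it --- you offer a heuristic about ``extra polar directions that absorb the global residue constraint'' and a promised ``slope computation.'' As stated, the heuristic is dangerous: the image of the contraction always lands in sections twisted by $\operatorname{im}\phi\subset TX(-S)$, and if you bookkeep the Serre duality twist incorrectly (note $\operatorname{End}_n(E)^*=\operatorname{End}_P(E)\otimes\mathcal{O}_X(S)$, so the dual space is \eqref{z2n}, not $H^0(\operatorname{End}_P(E)\otimes V\otimes K_X\otimes\mathcal{O}_X(-S))$) the same argument would ``prove'' that the pairing vanishes even when $\widetilde{\phi}^*$ is an isomorphism, contradicting (2). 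The claim you need is true and can be closed --- e.g.\ by a trace argument: if some $\theta$ contracted with $\psi$ had nonzero scalar part on an indecomposable summand, then $\operatorname{tr}(\theta)$ would exhibit $\mathcal{O}_X$ as a direct summand of $V\otimes K_X\otimes\mathcal{O}_X(S)$, which contradicts stability of $V$ when $\operatorname{rk}V\geq 2$ and forces $\widetilde{\phi}^*$ to be an isomorphism when $\operatorname{rk}V=1$ --- but that argument is not in your write-up, and it additionally presupposes a parabolic Atiyah--Weil pairing formula (nilpotent-plus-scalar decomposition detecting only parabolic degrees) that you invoke but never establish.

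For comparison, the paper avoids the pairing formula entirely and argues by cases on $\deg V$ (Propositions \ref{prop2} and \ref{prop3}). If $\deg V\geq\operatorname{rk}(V)(\deg TX-m)$, stability of $V$ forces $\phi=0$, so the zero strongly parabolic Higgs field is already a parabolic Lie algebroid connection --- a mechanism absent from your proposal. Otherwise $\deg(V\otimes K_X)<0$, and the Serre-dual space \eqref{z3n} either vanishes outright (when $E_*$ is parabolic semistable, by a slope comparison with the stable bundle $V\otimes K_X$) or consists of sections nilpotent with respect to the Harder--Narasimhan filtration \eqref{y1n}, which are shown to pair to zero with $\zeta$ via the filtered subsequence \eqref{y4}, following \cite[Proposition 3.1]{BKS}. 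Your route, once the missing trace/indecomposability step is supplied, is arguably more uniform (no case split on $\deg V$ or on semistability of $E_*$), but as written it is a strategy with its decisive step acknowledged as open rather than a proof.
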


Finally, in the last section, we study integrable Lie algebroid connections and their moduli by analyzing them in the 
framework of parabolic $\Lambda$--modules in the sense of 
\cite{Si3} and \cite{Al}. It is shown that the integrable quasi-parabolic Lie algebroid connections are equivalent to 
the parabolic $\Lambda$-module structures in the sense of Definition \ref{def:parabolicLambdaModule} (\cite[Definition 2.5]{Al}).
Defining the semistable parabolic Lie algebroid connections in this context, we prove the following theorem (see Theorem 
\ref{thm2}).

\begin{theorem}
\label{thm0.2}
Let $(V,\,\phi)$ be any Lie algebroid such that $\phi|_S\,=\,0$. For every system of
weights $\alpha\,=\,\{\{\alpha_i^x\}_{i=1}^{l_x}\}_{x\in S}$, every parabolic type
$\overline{r}\,=\,\{\{r_i^x\}_{i=1}^{l_x}\}_{x\in S}$ and every degree $d$, there exists a quasi-projective
coarse moduli space $\mathcal{M}_{(V,\phi)}(\alpha,\,\overline{r},\,d)$ of semistable integrable parabolic
$(V,\,\phi)$--connections $(E_*,\,D)$ on $(X,\,S)$, with $D\,:\,E\,\longrightarrow\, E\otimes V^*$,
$\deg(E)\,=\,d$, $\dim(E_x^i/E_x^{i+1})\,=\,r_x^i$ and system of weights $\alpha$.

If $V$ is a line bundle, then this moduli space is nonempty if and only if either
\begin{itemize}
\item $(V,\,\phi)\,\,\ne \,\, (TX(-S),\ i\,:\,TX(-S)\,\hookrightarrow \, TX)$, or

\item $(V,\,\phi)\,\,=\,\,(T(-S),\ i\,:\,TX(-S)\,\hookrightarrow\, TX)$ and
$$d+\sum_{x\in S} \sum_{i=1}^{l_x} \alpha_i^x r_i^x \,\,=\,\, 0.$$
\end{itemize}
If $V$ is a stable bundle, then for the moduli space to be nonempty it is sufficient to have either
\begin{itemize}
\item $\operatorname{im} \phi \ne TX(-S)$, or
\item $\operatorname{im} \phi = TX(-S)$ and 
$$d+\sum_{x\in S} \sum_{i=1}^{l_x} \alpha_i^x r_i^x \,=\, 0.$$
\end{itemize}
\end{theorem}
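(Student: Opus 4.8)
The plan is to prove the statement in two essentially independent stages: first construct the moduli space, and then decide when it is nonempty. For the construction I would invoke the equivalence established earlier between integrable quasiparabolic $(V,\phi)$-connections on $(X,S)$ and parabolic $\Lambda$-modules in the sense of Definition \ref{def:parabolicLambdaModule}, where $\Lambda=\Lambda_{(V,\phi)}$ is the sheaf of rings of differential operators generated by the Lie algebroid (its twisted enveloping algebra, using $\phi|_S=0$). Under this equivalence the semistability of a parabolic $(V,\phi)$-connection is by design the $\Lambda$-module semistability, and the discrete invariants correspond to $(\alpha,\,\overline r,\,d)$. Hence $\mathcal{M}_{(V,\phi)}(\alpha,\overline r,d)$ is obtained by applying directly the GIT construction of moduli of semistable parabolic $\Lambda$-modules of Alfaya \cite{Al} (generalizing Simpson \cite{Si}); the only point to verify is that $\Lambda_{(V,\phi)}$ is a filtered sheaf of rings of differential operators with the almost-polynomial split symbol structure required there, which follows from $(V,\phi)$ being a Lie algebroid vanishing over $S$.

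The heart of the nonemptiness statement is a single construction. Write $\operatorname{im}\phi=TX(-Z)$ for the unique effective divisor $Z\ge S$, so that the dual anchor extends to $\widehat{\phi}^{*}\colon K_X(Z)\to V^{*}$ restricting to $\widetilde{\phi}^{*}$ on $K_X(S)$. For any ordinary logarithmic connection $\nabla\colon E\to E\otimes K_X(Z)$ on the underlying bundle, the composite $D:=(\mathrm{id}_E\otimes\widehat{\phi}^{*})\circ\nabla$ is a $(V,\phi)$-connection, because $\widehat{\phi}^{*}$ extends $\phi^{*}$. Crucially $D$ is \emph{automatically integrable}: under the identification $D_v=\nabla_{\phi(v)}$ and using that $\phi$ is a Lie algebroid morphism, the curvature is
\[
F_D(v_1,v_2)\;=\;\nabla_{\phi(v_1)}\nabla_{\phi(v_2)}-\nabla_{\phi(v_2)}\nabla_{\phi(v_1)}-\nabla_{\phi([v_1,v_2])}\;=\;F_\nabla(\phi(v_1),\phi(v_2)),
\]
which vanishes because $X$ is a curve and so $\wedge^2 T^{*}X=0$. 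Thus the construction of integrable $(V,\phi)$-connections reduces to that of ordinary $Z$-logarithmic connections compatible with the parabolic data on $S$.

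This reduction produces the dichotomy. A logarithmic connection compatible with the parabolic structure on $S$ has residues constrained by $\sum_{p\in Z}\operatorname{tr}\operatorname{Res}_p\nabla=-d$, and the traces at the points of $S$ are fixed by the weights as $\sum_{x,i}\alpha_i^x r_i^x$. If $\operatorname{im}\phi\neq TX(-S)$, i.e.\ $Z\neq S$, the residues at the free points $Z\setminus S$ are unconstrained by the weights, so a suitable $\nabla$ exists for \emph{every} degree $d$; if $\operatorname{im}\phi=TX(-S)$, i.e.\ $Z=S$, the identity forces $d+\sum_{x,i}\alpha_i^x r_i^x=0$. Choosing the underlying $E_*$ semistable (or $\nabla$ irreducible) makes $(E_*,D)$ a $\Lambda$-semistable point, giving nonemptiness in each allowed case, and this covers the \emph{sufficiency} claim for stable $V$ (where $\widetilde{\phi}^{*}$ can never be an isomorphism since $\operatorname{rk}V>1$, so Theorem \ref{thm0.1}(1) supplies connections freely). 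For the line-bundle case, $\operatorname{im}\phi=TX(-S)$ forces $(V,\phi)=(TX(-S),i)$, where $(V,\phi)$-connections are exactly ordinary parabolic connections; taking determinants, $\det E_*$ is a parabolic line bundle of parabolic degree $d+\sum_{x,i}\alpha_i^x r_i^x$ carrying an induced parabolic connection, which forces that parabolic degree to vanish. This supplies the converse and upgrades the line-bundle case to an ``if and only if''.

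The main obstacle, and where the genuine work lies, is the realization step in the nonemptiness direction: producing a semistable (or irreducible) parabolic bundle of the prescribed type $\overline r$, degree $d$ and weights $\alpha$ that simultaneously admits a $Z$-logarithmic connection with the prescribed residue eigenvalues on $S$ and with the free residues on $Z\setminus S$ chosen to satisfy the residue identity — a parabolic Deligne--Simpson type existence problem — while guaranteeing $\Lambda$-semistability of the resulting pair. A secondary technical point is checking that $\Lambda_{(V,\phi)}$ meets Alfaya's hypotheses so that the coarse moduli space is genuinely quasi-projective. I also expect to have to argue carefully that the weaker, sufficiency-only statement is all one can assert for stable $V$ of higher rank, precisely because when $\operatorname{rk}V>1$ not every integrable $(V,\phi)$-connection need arise from the pushforward construction above.
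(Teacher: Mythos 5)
Your overall architecture matches the paper's: the moduli space is built by identifying integrable quasiparabolic $(V,\phi)$--connections with parabolic $\Lambda_V$--modules and invoking the GIT construction of \cite{Al} (the paper additionally encodes the weight condition \eqref{eq:residualCond} as residual conditions in the sense of \cite[Definition 6.2]{Al} and cuts out a closed subscheme via \cite[Theorem 6.3]{Al}), and nonemptiness is obtained by factoring $D$ through the line bundle $L=\operatorname{im}\phi$ so that integrability comes for free on a curve. However, your treatment of the key existence step has a genuine gap. You reduce nonemptiness to producing a semistable parabolic bundle of prescribed type carrying a $Z$--logarithmic connection with prescribed residues on $S$ and freely chosen residues on $Z\setminus S$, and you explicitly flag this as an unsolved ``parabolic Deligne--Simpson type'' problem. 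But no such problem needs to be solved: Proposition \ref{prop3} (equivalently Theorem \ref{thm1}) already guarantees, via the vanishing of the Atiyah obstruction class $\zeta$, that \emph{every} parabolic bundle admits a parabolic $(L,i)$--connection when $\widetilde{\phi}^*$ is not an isomorphism, and every parabolic bundle with all direct summands of parabolic degree zero admits one when it is. The paper simply takes a parabolically polystable $E_*$ of the given type (which exists for every $(\alpha,\overline{r},d)$), applies Proposition \ref{prop3} to get $\nabla:E\to E\otimes L^*$, and pushes forward along $\operatorname{id}\otimes\phi^*$. Your proposal, as written, does not close this step.

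A second, related error: for stable $V$ of rank at least two you assert that ``Theorem \ref{thm0.1}(1) supplies connections freely.'' The connections produced by Theorem \ref{thm0.1}(1) come from holomorphic splittings of the Atiyah sequence \eqref{Atiya_para_Lie} and carry no integrability guarantee, whereas the moduli space parametrizes \emph{integrable} connections. This is exactly why the paper does not invoke Theorem \ref{thm1} directly in the higher-rank case but instead routes the construction through $L=\operatorname{im}\phi$ (treating $L=0$ separately with $D=0$), where integrability of $D_v=\nabla_{\phi(v)}$ follows from $\operatorname{rank}(L)\le 1$ and $\phi$ being a Lie algebroid morphism --- the same computation you perform, but which you then do not consistently use. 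Minor additional points: you omit the case $\phi=0$ (where $\operatorname{im}\phi$ is not of the form $TX(-Z)$ and one takes the zero Higgs field), and your determinant argument for the converse in the line-bundle case is a valid, slightly more elementary alternative to the paper's citation of \cite{BL}.
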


\section{Lie algebroid connections and parabolic bundles}
\label{Lie_conn}

\subsection{Lie algebroid connection}

Let $X$ be a compact connected Riemann surface. The holomorphic cotangent and tangent bundles
of $X$ will be denoted by $K_X$ and $TX$ respectively. The first
holomorphic jet bundle of a holomorphic vector bundle
$W$ on $X$ will be denoted by $J^1(W)$; so $J^1(W)$ is a holomorphic vector bundle on $X$
that fits in the following short exact sequence of holomorphic vector bundles on $X$:
$$
0\, \longrightarrow\, W\otimes K_X \, \longrightarrow\, J^1(W) \, \longrightarrow\, W
\, \longrightarrow\, 0.
$$

A $\mathbb{C}$--Lie algebra structure on a holomorphic vector
bundle $V$ on $X$ is a $\mathbb{C}$--bilinear pairing defined by a sheaf homomorphism
$$
[-,\, -] \,\,:\,\, V\otimes_{\mathbb C} V \,\, \longrightarrow\,\, V,
$$
which is given by an ${\mathcal O}_X$--linear holomorphic
homomorphism $$J^1(V)\otimes J^1(V)\, \longrightarrow\, V$$ of vector bundles, such
that $$[s,\, t]\,=\, -[t,\, s]\ \ \text{ and }\ \ [[s,\, t],\, u]+[[t,\, u],\, s]+[[u,\, s],\, t]\,=\,0$$
for all locally defined holomorphic sections $s,\, t,\, u$ of $V$. The Lie bracket operation on $TX$ gives
the structure of a Lie algebra on it.

A \textit{Lie algebroid} on $X$ is a pair $(V,\, \phi)$, where
\begin{enumerate}
\item $V$ is a holomorphic vector bundle on $X$ equipped with the structure of a $\mathbb{C}$--Lie algebra,

\item $\phi\, :\, V\, \longrightarrow\, TX$ is an ${\mathcal O}_X$--linear homomorphism such that
$$
\phi([s,\, t])\,=\, [\phi(s),\, \phi(t)]
$$
for all locally defined holomorphic sections $s,\, t$ of $V$, and

\item $[s,\, f\cdot t]\,=\, f\cdot [s,\, t]+\phi(s)(f)\cdot t$ for all locally defined holomorphic sections
$s,\, t$ of $V$ and all locally defined holomorphic functions $f$ on $X$.
\end{enumerate}
The above homomorphism $\phi$ is called the \textit{anchor map} of the Lie algebroid.

\begin{remark}
Observe that the above condition (2) is actually a consequence of conditions (1) and (3), since, for
all holomorphic local sections $s,t,u$ of $V$ and each locally defined
holomorphic function $f$ in $\mathcal{O}_X$ we have, on one hand,
$$[[s,\,t],\,fu]\,=\,f[[s,\,t],\,u]+\phi([s,\,t])(f) u$$
and, on the other hand
$$
[[s,\,t],\,fu]\,=\,[[s,\,fu],\,t]+[s,\,[t,\,fu]]\,=\,[f[s,\,u]+\phi(s)(f)u,\,t]
$$
$$
+[s,\,f[t,\,u]+\phi(t)(f) u] \,=\, f[[s,\,u],\,t]-\phi(t)(f)[s,\,u]+\phi(s)(f)[u,\,t]
$$
$$
-\phi(t)(\phi(s)(f)) u + f[s,\,[t,\,u]]+\phi(s)(f)[t,\,u] +\phi(t)(f)[s,\,u]+\phi(s)(\phi(t)(f))u
$$
$$
=\, f[[s,\,t],\,u]+\left( \phi(s)(\phi(t)(f))-\phi(t)(\phi(s)(f))\right) u\nonumber
$$
$$
=\, f[[s,\,t],\,u]+[\phi(s),\, \phi(t)](f) u.\nonumber
$$
As these coincide for each local holomorphic section $u$ of $V$ and each local holomorphic
function $f$, we must have
$$\phi([s,\,t])\,\,=\,\,[\phi(s),\,\phi(t)].$$
\end{remark}

Let $(V,\, \phi)$ be a Lie algebroid on $X$. We have the dual homomorphism
\begin{equation}\label{e2}
\phi^*\,:\, K_X\, \longrightarrow\, V^*
\end{equation}
of $\phi$. Let
\begin{equation}\label{e3}
q\,\, :\,\, V^* \,\, \longrightarrow\,\, V^*/\phi^*(K_X)\,\,=:\,\, {\mathcal Q}
\end{equation}
be the corresponding quotient map. Note that the above coherent analytic sheaf ${\mathcal Q}$ may
have torsion. For any point $y\, \in\, X$, the quotient ${\mathcal Q}/\mathbf{m}_y {\mathcal Q}$, where
$\mathbf{m}_y\, \subset\, {\mathcal O}_X$
is the maximal ideal associated to $y$, will be denoted by ${\mathcal Q}_y$.

The fiber over any point $y\,\in\, X$ of any vector bundle $W$ on $X$ will be denoted by $W_y$.

A Lie algebroid connection on a holomorphic vector bundle
$E$ on $X$ is a first order holomorphic differential operator
$$
D\,\,:\,\, E\,\,\longrightarrow\, \, E\otimes V^*
$$
such that
\begin{equation}\label{e-4}
D(fs) \,=\, fD(s) + s\otimes \phi^*(df)
\end{equation}
for all locally defined holomorphic section $s$ of $E$ and all locally defined holomorphic
function $f$ on $X$, where $\phi^*$ is the homomorphism constructed in \eqref{e2}.

Consider a Lie algebroid connection $D\,:\, E\,\longrightarrow\, E\otimes V^*$
on $E$. Fix a point $x\, \in\, X$.
Take any $v\, \in\, E_x$. Choose a holomorphic section $s$ of $E$, defined on an open neighborhood $U$ of $x$, such that
$s(x)\,=\, v$. Consider $D(s)\, \in\, H^0\left(U,\, (E\otimes V^*)\big\vert_U\right)$. Let
\begin{equation}\label{ws}
\widehat{s} \,:=\, ({\rm Id}_E\times q)(D(s))(x) \, \in\, E_x\otimes {\mathcal Q}_x
\end{equation}
be the image, where $q$ is the projection in \eqref{e3}. If $s_1$ is another holomorphic section of
$E$, defined on an open neighborhood of $x$, with $s_1(x) \,=\, v$, then
\begin{equation}\label{e3a}
s-s_1\,\,=\,\, f\cdot t,
\end{equation}
where $t$ is a holomorphic section of $E$ defined on an
open neighborhood of $x$, and $f$ is a holomorphic function defined around $x$ with $f(x)\,=\, 0$. From \eqref{e-4}
and \eqref{e3a} it follows that
\begin{equation}\label{e3b}
(D(s) -D(s_1))(x) \,\,=\,\, t(x)\otimes \phi^*(df)(x) + f(x)\cdot (D(t))(x)
\,\,=\,\,t(x)\otimes \phi^*(df)(x),
\end{equation}
because $f(x)\,=\, 0$. Since $q(\phi^*(df))\,=\, 0$, where $q$ is the projection in \eqref{e3}, from
\eqref{e3b} it follows that $$\widehat{s} \,=\, ({\rm Id}_E\times q)(D(s))(x) \,=\, ({\rm Id}_E\times q)(D(s_1))(x).$$
Consequently, we get a linear map
\begin{equation}\label{e4}
{\mathcal S}_x\,\,:\,\, E_x\,\, \longrightarrow\,\, E_x\otimes {\mathcal Q}_x
\end{equation}
that sends any $v\, \in\, E_x$ to $\widehat{s}\,:=\, ({\rm Id}_E\times q)(D(s))(x)$, where $s$ is any holomorphic section
of $E$, defined around $x$, with $s(x)\,=\, v$.

Notice that this linear map $\mathcal{S}_x$ can be defined for any Lie algebroid connection and over each 
point of the curve $x\,\in \,X$; not necessarily a parabolic point in $S$ (parabolic
points are introduced in Section \ref{Para}). Let $S\, \subset\, X$ be a reduced
effective divisor on $X$. In the special case of logarithmic 
connections --- where $(V,\, \phi)\,=\,(TX(-S),\, i:TX(-S)\hookrightarrow TX)$ ---
we have $\phi|_x\, =\, 0$ for $x\, \in\, S$, so $\mathcal{Q}_x\,=\,(K_X\otimes {\mathcal O}_X(x))|_x
\,= \, \mathbb{C}$ (Poincar\'e adjunction formula) and the map $\mathcal{S}_x$ becomes the usual notion 
of residue of a logarithmic connection. Thus, we can understand $\mathcal{S}_x$ as a natural generalization 
for arbitrary Lie algebroids --- and arbitrary points --- of the residue map. In fact, in the next section, we will 
see that, when $x\in S$ is a parabolic point, the map $\mathcal{S}_x$ plays exactly the same role as the 
residue in the characterization of parabolic Lie algebroid connections.

\subsection{Parabolic bundles}\label{Para}

Let $S \,=\, \{x_1,\, \cdots,\, x_m\}\, \subset\, X$ be a nonempty
finite subset, whose elements will be called the parabolic points. Let 
$E$ be a holomorphic vector bundle over $X$. A \emph{quasi-parabolic structure} on $E$ over $x\, \in\, S$ is a
strictly decreasing filtration of subspaces of the fiber $E_x$
\begin{equation}\label{eq:a1}
E_x \,=\, E^1_x \,\supsetneq\, E^2_x \,\supsetneq\, \, \cdots\, \supsetneq\, E^{\ell_x}_x\,\supsetneq\, E^{\ell_x+1}_x \,=\, 0.
\end{equation}
A parabolic structure on $E$ over $x\, \in\, S$ is a quasi-parabolic filtration as in \eqref{eq:a1}
together with a sequence of real numbers
\begin{equation}\label{ew}
0 \,\leq\, \alpha^x_1 \,< \,\cdots\, <\, \alpha^x_{\ell_x} \,< \,1.
\end{equation}
Note that $\ell_x\, \geq\, 1$ if $E\, \not=\, 0$.
A parabolic structure on $E$ is parabolic structure on $E$ on every $x\, \in\, S$ (see \cite{MS}, \cite{MY}). A parabolic
vector bundle on $X$ consists of a vector bundle $E$ on $X$ together with a parabolic structure on $E$. For notational
convenience, such a parabolic vector bundle will also be denoted by $E_*$.

Take a parabolic vector bundle
\begin{equation}\label{e6}
E_*\,=\, (E,\, \{\{E^i_x\}_{i=1}^{\ell_x}\}_{x\in S},\, \{\{\alpha^x_i\}_{i=1}^{\ell_x}\}_{x\in S})
\end{equation}
as above. For any $x\,\in\, S$ and any $1\,\leq\, i\,\leq\,\ell_x+1$, let ${\mathcal E}_{x,i}$ be the unique holomorphic vector bundle
on $X$ that fits in the following short exact sequence of coherent analytic sheaves on $X$:
\begin{equation}\label{e5}
0\, \longrightarrow\, {\mathcal E}_{x,i}\, \xrightarrow{\,\,\,\iota_{x,i}\,\,}\, E \,
\longrightarrow\, E_x/E^i_x\, \longrightarrow\, 0.
\end{equation}
So ${\mathcal E}_{x,i}$ is identified with $E$ over the complement $X\setminus\{x\}$. These $\{{\mathcal E}_{x,i}\}$
form a decreasing sequence of subsheaves of $E$ for each $x\,\in \,S$:
\begin{equation}\label{eq:parSubsheaves}
E\,=\,{\mathcal E}_{x,1}\,\supsetneq\, {\mathcal E}_{x,2} \,\supsetneq\,\cdots
\,\supsetneq\, {\mathcal E}_{x,l_x} \,\supsetneq \,{\mathcal E}_{x,l_x+1}\,=\,E\otimes {\mathcal O}_X(-x).
\end{equation}

Consider the endomorphism bundle $\text{End}(E)\,=\, E\otimes E^*$. It has a coherent analytic
subsheaf
\begin{equation}\label{pe}
\text{End}_P(E)\, \subset \, \text{End}(E)
\end{equation}
defined by the following condition: For any
open subset $U\, \subset\, X$, any $$s\,\, \in\,\, H^0(U,\, \text{End}(E)\big\vert_U)$$ lies
in $H^0(U,\, \text{End}_P(E)\big\vert_U)$ if and only if 
$s(x)(E^i_x)\, \subset\, E^i_x$ for all $x\, \in\, S\bigcap U$ and $1\, \leq\, i\, \leq\, \ell_x$. Let
\begin{equation}\label{pe2}
\text{End}_n(E)\, \subset \, \text{End}_P(E)
\end{equation}
be the subsheaf defined by all $s$ as above such that $s(x)(E^i_x)\, \subset\, E^{i+1}_x$
for all $x\, \in\, S\bigcap U$ and $1\, \leq\, i\, \leq\, \ell_x$.

The notions of semistable, stable and polystable vector bundles generalize to the parabolic context
(see \cite{MS}, \cite{MY}). A parabolic vector bundle $E_*$ is called stable (respectively, semistable) if
for any nontrivial subbundle $F\subset E$ we have the following inequality between the parabolic slopes
of $E_*$ and the induced parabolic structure on $F$ by $E_*$:
$$
\frac{\deg(F)+\sum_{x\in S}\sum_{i=1}^{l_x} \alpha_i^x (\dim(E_x^i\cap F_x)-\dim(E_x^{i+1}\cap F_x))}{\operatorname{rk}(F)}
$$
$$
<\, \,\,\, \text{(respectively, }\le \text{)}\,\,\,
\frac{\deg(E)+\sum_{x\in S}\sum_{i=1}^{l_x} \alpha_i^x (\dim(E_x^i)-\dim(E_x^{i+1}))}{\operatorname{rk}(E)}.
$$
We call the quotient
$$\text{par-}\mu(E_*) \,=\, \frac{\deg(E)+\sum_{x\in S}\sum_{i=1}^{l_x} \alpha_i^x (\dim(E_x^i)-\dim(E_x^{i+1}))}{\operatorname{rk}(E)}$$
the parabolic slope of $E_*$. A parabolic vector bundle $E_*$ is called polystable if it is a direct sum of
stable parabolic bundles of same parabolic slope. Also, the notion of Harder--Narasimhan
filtration of vector bundles extends to the parabolic context (see \cite{MY}).

\section{Connections on parabolic bundles}\label{conn_para}

\subsection{Quasi-parabolic Lie algebroid connections}

Let $(V,\, \phi)$ be a Lie algebroid on $X$. Take a parabolic vector bundle $E_*$ on $X$ as in \eqref{e6}.

\begin{definition}
\label{def:parabolicConnection}
A \textit{quasi-parabolic Lie algebroid connection} on $E_*$ is a Lie algebroid connection
$$
D\,\,:\,\, E\,\,\longrightarrow\, \, E\otimes V^*
$$
on $E$ (see \eqref{e-4}) such that
\begin{equation}\label{e7}
D({\mathcal E}_{x,i}) \,\, \subset\, \, {\mathcal E}_{x,i}\otimes V^*
\end{equation}
for all $x\, \in\, S$ and every $1\, \leq\, i\, \leq\, \ell_x+1$ (see \eqref{e5}).
\end{definition}

\begin{lemma}\label{lem1}
Fix $x\, \in\, S$ and $1\, \leq\, i\, \leq\, \ell_x$. Take a Lie algebroid connection
$$D\ :\ E\ \longrightarrow \
E\otimes V^*$$ such that the homomorphism ${\mathcal S}_x$ in \eqref{e4} maps $E^i_x\, \subset\, E_x$ to
$E^i_x\otimes {\mathcal Q}_x$. Then $D$ produces a homomorphism
$$
{\mathbb D}_{x,i}
 \,\, :\,\, {\mathcal E}_{x,i}\,\, \longrightarrow\,\, (E_x/E^i_x)\otimes\phi^*((K_X)_x)
\,\,\subset\,\, (E_x/E^i_x)\otimes V^*_x
$$
which satisfies the identity ${\mathbb D}_{x,i}(f\cdot s) \,=\, f(x)\cdot {\mathbb D}_{x,i}(s)$ for
every holomorphic section $s$ of ${\mathcal E}_{x,i}$ defined around $x\, \in\, S$ and every holomorphic
function $f$ defined on an open neighborhood of $x$, where $\phi^*$ is the homomorphism in \eqref{e2}.
\end{lemma}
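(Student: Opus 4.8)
The plan is to define $\mathbb{D}_{x,i}$ directly on local sections, then check that its image lands in the claimed subspace and that it satisfies the stated Leibniz-type identity. For a holomorphic section $s$ of $\mathcal{E}_{x,i}$ defined near $x$ --- so that $s(x)\in E^i_x$ by \eqref{e5} --- I would set
$$
\mathbb{D}_{x,i}(s)\,:=\,(\pi_x\otimes {\rm Id}_{V^*_x})(D(s)(x)),
$$
where $\pi_x\,:\,E_x\,\to\,E_x/E^i_x$ is the canonical projection. Since $s$ is an honest local section of $E$ through $\iota_{x,i}$, the value $D(s)(x)\in E_x\otimes V^*_x$ is unambiguously determined, so no choice is involved at this stage.

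The first substantive step is to show that the image lies in $(E_x/E^i_x)\otimes\phi^*((K_X)_x)$. Composing with ${\rm Id}\otimes q_x$, the element $({\rm Id}_{E_x}\otimes q_x)(D(s)(x))$ equals $\mathcal{S}_x(s(x))$ by the very definition of $\mathcal{S}_x$ in \eqref{e4}. The hypothesis $\mathcal{S}_x(E^i_x)\subset E^i_x\otimes\mathcal{Q}_x$ then forces $(\pi_x\otimes q_x)(D(s)(x))\,=\,0$, because $\pi_x$ annihilates $E^i_x$. Hence $\mathbb{D}_{x,i}(s)$ lies in the kernel of ${\rm Id}_{E_x/E^i_x}\otimes q_x$, which over the field $\mathbb{C}$ is exactly $(E_x/E^i_x)\otimes\ker(q_x)$. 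Finally, applying the right-exact functor $-\otimes\mathbb{C}_x$ to \eqref{e3} and using that $\mathcal{Q}$ is by definition the cokernel of $\phi^*$, one gets $\mathcal{Q}_x\,=\,\operatorname{coker}(\phi^*_x)$, so that $\ker(q_x)\,=\,\phi^*((K_X)_x)$, as needed.

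The last step is the identity $\mathbb{D}_{x,i}(f\cdot s)\,=\,f(x)\cdot\mathbb{D}_{x,i}(s)$. Note first that $f\cdot s$ is again a section of $\mathcal{E}_{x,i}$, since $(fs)(x)\,=\,f(x)s(x)\in E^i_x$. I would then invoke the defining property \eqref{e-4}, namely $D(fs)\,=\,fD(s)+s\otimes\phi^*(df)$, evaluate at $x$, and apply $\pi_x\otimes{\rm Id}$. The term $fD(s)$ contributes $f(x)\mathbb{D}_{x,i}(s)$, while the correction term $s\otimes\phi^*(df)$ evaluated at $x$ lies in $E^i_x\otimes V^*_x$ (because $s(x)\in E^i_x$) and is therefore killed by $\pi_x\otimes{\rm Id}$. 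This gives the required identity, which is precisely the statement that $\mathbb{D}_{x,i}$ is an $\mathcal{O}_X$-linear homomorphism into the skyscraper sheaf $(E_x/E^i_x)\otimes\phi^*((K_X)_x)$.

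The only real subtlety --- more bookkeeping than genuine difficulty --- is keeping the two quotients of $V^*_x$ apart: the cokernel projection $q_x\,:\,V^*_x\,\to\,\mathcal{Q}_x$ that governs $\mathcal{S}_x$, versus the image subspace $\phi^*((K_X)_x)\,=\,\ker(q_x)$ into which $\mathbb{D}_{x,i}$ takes values. Since $\mathcal{Q}$ may have torsion, I would take care to justify the identification $\mathcal{Q}_x\,=\,\operatorname{coker}(\phi^*_x)$ by right-exactness rather than naively; once this is in place, the two halves of the argument dovetail.
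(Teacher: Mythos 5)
Your proof is correct and follows essentially the same route as the paper: you define $\mathbb{D}_{x,i}$ as the evaluation $(\pi_x\otimes\mathrm{Id})(D(s)(x))$ (the paper's composition $\gamma\circ D\circ\iota_{x,i}$), use the hypothesis on $\mathcal{S}_x$ to show the image is killed by $\mathrm{Id}\otimes q_x$ and hence lands in $(E_x/E^i_x)\otimes\phi^*((K_X)_x)$, and derive the identity from \eqref{e-4} by noting that $s(x)\in E^i_x$ is annihilated by $\pi_x$. Your explicit justification via right-exactness that $\ker(q_x)=\phi^*((K_X)_x)$ is a point the paper absorbs into the exactness of the bottom row of its diagram \eqref{e9}, but it is the same argument.
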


\begin{proof}
We have the following diagram of homomorphisms
\begin{equation}\label{e9}
\begin{matrix}
&& {\mathcal E}_{x,i}\\
&& \,\,\,\,\,\, \Big\downarrow \iota_{x,i}\\
&& E & \stackrel{D}{\longrightarrow} & E\otimes V^*\\
&&&& \,\, \Big\downarrow \gamma\\
0 & \longrightarrow & (E_x/E^i_x)\otimes \phi^*((K_X)_x) & \stackrel{\beta_1}{\longrightarrow} &
(E_x/E^i_x)\otimes V^*_x & \xrightarrow{\,\,\, {\rm Id}_{E_x/E^i_x}\otimes q_x\,\,} & (E_x/E^i_x)
\otimes {\mathcal Q}_x & \longrightarrow & 0
\end{matrix}
\end{equation}
where $\iota_{x,i}$ and $q_x$ are the homomorphisms in \eqref{e5} and \eqref{e3} respectively, while $\gamma$ is the tensor
product of the natural projection $V^*\, \longrightarrow\, V^*_x$ with the composition
of maps $E\, \longrightarrow\, E_x\, \longrightarrow\, (E_x/E^i_x)$, and $\beta_1$ is the tensor product of the
identity map of $E_x/E^i_x$ with the natural inclusion map $\phi^*((K_X)_x)\, \hookrightarrow\, V^*_x$; the row at the
bottom of \eqref{e9} is exact. The given condition that ${\mathcal S}_x$ maps $E^i_x$ to
$E^i_x\otimes {\mathcal Q}_x$ is equivalent to the condition that the composition of maps in \eqref{e9}
$$
({\rm Id}_{E_x/E^i_x}\otimes q_x)\circ \gamma\circ D\circ\iota_{x,i}\,\,:\,\,
{\mathcal E}_{x,i}\,\, \longrightarrow\,\, (E_x/E^i_x)\otimes {\mathcal Q}_x
$$
vanishes. Consequently, $\gamma\circ D\circ\iota_{x,i}$ produces a homomorphism
$$
{\mathbb D}_{x,i}\,:\, {\mathcal E}_{x,i}\, \longrightarrow\, (E_x/E^i_x)\otimes \phi^*((K_X)_x).
$$
{}From \eqref{e-4} we conclude that ${\mathbb D}_{x,i}(f\cdot s) \,=\, f(x)\cdot {\mathbb D}_{x,i}(s)$
for every holomorphic section $s$ of ${\mathcal E}_{x,i}$ defined around $x$ and every holomorphic function $f$
defined around $x\,\in\, X$ (note that the image of $s$ in $E_x/E^i_x$ vanishes).
\end{proof}

\begin{proposition}\label{prop1}
Take a Lie algebroid connection $D\,:\, E\,\longrightarrow \, E\otimes V^*$. It gives a quasi-parabolic
Lie algebroid connection on the
parabolic vector bundle $E_*$ if and only if the following two statements hold:
\begin{enumerate}
\item For every $x\, \in\, S$, the homomorphism ${\mathcal S}_x$ in \eqref{e4} maps $E^i_x\, \subset\, E_x$ to
$E^i_x\otimes {\mathcal Q}_x$ for all $1\, \leq\, i\, \leq\, \ell_x+1$.

\item For all $x\, \in\, S$, the homomorphism of fibers $\phi^*_x\, :\, (K_X)_x\, \longrightarrow\, V^*_x$
(see \eqref{e2}) is the zero map.
\end{enumerate}
\end{proposition}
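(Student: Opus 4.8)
The plan is to recast the defining inclusion \eqref{e7} of a quasi-parabolic Lie algebroid connection as the vanishing of a single fiberwise homomorphism, and then to split that vanishing into its two constituent pieces by means of the exact sequence occurring in the proof of Lemma \ref{lem1}. First I would note that, because $V^*$ is locally free (hence flat), tensoring the defining sequence \eqref{e5} of $\mathcal{E}_{x,i}$ by $V^*$ stays exact and identifies the subsheaf $\mathcal{E}_{x,i}\otimes V^*\subset E\otimes V^*$ with the kernel of the evaluate-and-project homomorphism $\gamma\,:\,E\otimes V^*\longrightarrow (E_x/E^i_x)\otimes V^*_x$ from \eqref{e9}. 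Consequently the inclusion \eqref{e7} holds for a given $x$ and $i$ if and only if the composite
$$
\Psi_{x,i}\,:=\,\gamma\circ D\circ\iota_{x,i}\,:\,\mathcal{E}_{x,i}\,\longrightarrow\,(E_x/E^i_x)\otimes V^*_x
$$
vanishes, so that being quasi-parabolic amounts to $\Psi_{x,i}=0$ for all $x\in S$ and all $1\le i\le\ell_x+1$.

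Next I would exploit the exactness of the bottom row of \eqref{e9}. Since $\ker(\mathrm{Id}\otimes q_x)=(E_x/E^i_x)\otimes\phi^*((K_X)_x)$, the vanishing of $\Psi_{x,i}$ is equivalent to the simultaneous vanishing of $(\mathrm{Id}\otimes q_x)\circ\Psi_{x,i}$ and of the homomorphism that $\Psi_{x,i}$ induces into $(E_x/E^i_x)\otimes\phi^*((K_X)_x)$ once the former vanishes. For $1\le i\le\ell_x$ the argument in the proof of Lemma \ref{lem1} identifies $(\mathrm{Id}\otimes q_x)\circ\Psi_{x,i}=0$ with statement (1), and the induced homomorphism with the map $\mathbb{D}_{x,i}$ of that lemma. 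For $i=\ell_x+1$ statement (1) is vacuous, and $(\mathrm{Id}\otimes q_x)\circ\Psi_{x,\ell_x+1}$ vanishes automatically: every local section $s$ of $\mathcal{E}_{x,\ell_x+1}=E\otimes\mathcal{O}_X(-x)$ satisfies $s(x)=0$, so $\mathcal{S}_x(s(x))=0$. Hence the quasi-parabolic condition is equivalent to statement (1) together with the vanishing of all the induced maps $\mathbb{D}_{x,i}$.

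It then remains to identify the vanishing of the maps $\mathbb{D}_{x,i}$ with statement (2). One implication is immediate: if $\phi^*_x=0$ for every $x\in S$ then $\phi^*((K_X)_x)=0$, the common target $(E_x/E^i_x)\otimes\phi^*((K_X)_x)$ is zero, and every $\mathbb{D}_{x,i}$ vanishes automatically; combined with statement (1) this yields $\Psi_{x,i}=0$, i.e. \eqref{e7}. For the reverse implication I would evaluate $\Psi_{x,\ell_x+1}$ on local sections of the special form $s=f\cdot t$, where $t$ is a local section of $E$ and $f$ is a local holomorphic function with $f(x)=0$ (so that $s\in\mathcal{E}_{x,\ell_x+1}$). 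The Leibniz rule \eqref{e-4} gives $D(s)=fD(t)+t\otimes\phi^*(df)$; since $f(x)=0$ the first summand is killed by $\gamma$, leaving
$$
\Psi_{x,\ell_x+1}(s)\,=\,t(x)\otimes\phi^*_x(df(x))\,\in\,E_x\otimes\phi^*((K_X)_x).
$$
Because $E_x\ne 0$ one may choose $t$ with $t(x)\ne 0$, and as $f$ varies $df(x)$ ranges over all of $(K_X)_x$; hence the vanishing of $\Psi_{x,\ell_x+1}$ forces $\phi^*_x=0$, which is statement (2). Assembling the three reductions gives both directions of the proposition.

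The step I expect to be the main obstacle is this last computation deriving statement (2): in contrast to the purely diagrammatic reductions above, it requires choosing the right test sections $s=f\cdot t$ and invoking the Leibniz identity \eqref{e-4} to isolate the fiberwise anchor-dual $\phi^*_x$, and it relies on having a graded quotient $E_x/E^i_x$ that is nonzero. This last point is guaranteed by working at $i=\ell_x+1$, where $E_x/E^{\ell_x+1}_x=E_x\ne 0$ since $E\ne 0$ and $\mathcal{E}_{x,\ell_x+1}=E\otimes\mathcal{O}_X(-x)$.
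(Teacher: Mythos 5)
Your proof is correct and takes essentially the same approach as the paper: both reduce the inclusion \eqref{e7} to the vanishing of $\gamma\circ D\circ\iota_{x,i}$, split that via the exact bottom row of \eqref{e9} into condition (1) together with the vanishing of the maps $\mathbb{D}_{x,i}$ from Lemma \ref{lem1}, and extract $\phi^*_x=0$ by applying the Leibniz rule to test sections $f\cdot t$ with $f(x)=0$ against a nonzero class in $E_x/E^i_x$ (the paper works at any $i$ with $E_x\setminus E^i_x$ nonempty, you at $i=\ell_x+1$, which is the same choice).
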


\begin{proof}
First assume that $D$ gives a quasi-parabolic Lie algebroid connection on the
parabolic vector bundle $E_*$. Fix any $x\, \in\, S$ and $1\, \leq\, i\, \leq\, \ell_x+1$. Take any
$v\, \in\, E^i_x\, \subset\, E_x$, and choose a holomorphic section $s$ of $E$, defined on an open neighborhood $U$ of $x$,
such that $s(x)\,=\, v$. From \eqref{e5} it follows that $s\, \in\, H^0(U,\, {\mathcal E}_{x,i}\big\vert_U)$. From \eqref{e7}
we know that
$$
D(s)\,\, \subset\,\, H^0\left(U,\, ({\mathcal E}_{x,i}\otimes V^*)\big\vert_U \right).
$$
Therefore, $\widehat{s}$ in \eqref{ws} satisfies the condition $$\widehat{s}\, \in\, ({\mathcal E}_{x,i})_x\otimes {\mathcal Q}_x
\,=\, ({\mathcal E}_{x,i}\otimes {\mathcal Q})_x.$$
Now from the construction of map ${\mathcal S}_x$ in \eqref{e4} it follows immediately that
${\mathcal S}_x(v)\, \in\, E^i_x\otimes {\mathcal Q}_x$.

To prove the second statement in the proposition, take any $x\, \in\, S$ and any $w\, \in\, E_x$. Since the first statement
in the proposition holds, Lemma \ref{lem1} says that
$$
{\mathbb D}_{x,i}
 \,\, :\,\, {\mathcal E}_{x,i}\,\, \longrightarrow\,\, (E_x/E^i_x)\otimes\phi^*((K_X)_x)
\,\,\subset\,\, (E_x/E^i_x)\otimes V^*_x.
$$
Furthermore, we have
\begin{equation}\label{e10}
{\mathbb D}_{x,i}\,\,=\,\, 0,
\end{equation}
because $D$ takes ${\mathcal E}_{x,i}$ to ${\mathcal E}_{x,i}\otimes V^*$. From \eqref{e10} we will deduce the
second statement that $\phi^*_x$ is the zero homomorphism.

For this, take any $v\, \in\, E_x\setminus E^i_x$ for some $i$ (note that $E_x\setminus E^i_x$ is nonempty when
$i\,=\, \ell_x+1$), and
choose a section $s\, \in\, H^0(U,\, E\big\vert_U)$ on a small open neighborhood $U$ of $x$ such that
$s(x)\,=\, v$. Fix a holomorphic function $f$ on $U$ such that $f(x)\,=\, 0$ and $(df)(x)\, \not=\,
0$. Then from \eqref{e5} it follows that $$f\cdot s \,\, \in\,\, H^0(U,\, {\mathcal E}_{x,i}\big\vert_U).$$
Now using \eqref{e-4} we have
\begin{equation}\label{e11}
D(f\cdot s)(x) \,=\, f(x)\cdot D(s)(x) + s(x)\otimes\phi^*(df)(x) \,=\, s(x)\otimes \phi^*(df)(x)
\end{equation}
because $f(x)\,=\, 0$. Let $\widehat{\gamma}\, :\, E_x\, \longrightarrow\, E_x/E^i_x$ be the quotient map. 
Note that ${\mathbb D}_{x,i}$ satisfies the condition $${\mathbb D}_{x,i}(f\cdot s)\,\,=\,\, (\widehat{\gamma}\otimes
{\rm Id}_{V^*_x})(D(f\cdot s)(x))$$
(see its construction in Lemma \ref{lem1}). Hence from \eqref{e11} it follows that
$$
{\mathbb D}_{x,i}(f\cdot s)\,\,=\,\,(\widehat{\gamma}\otimes {\rm Id}_{V^*_x})(D(f\cdot s)(x))
\,\,=\,\, \widehat{\gamma}(s(x))\otimes \phi^*(df)(x).
$$
Now \eqref{e10} implies that
\begin{equation}\label{e8}
\widehat{\gamma}(s(x))\otimes \phi^*(df)(x)\,\,=\,\,{\mathbb D}_{x,i}(f\cdot s)\,\,=\,\,0.
\end{equation}
Note that $\widehat{\gamma}(s(x))\,\not=\, 0$ because $s(x)\,=\, v\, \in\, E_x\setminus E^i_x$. Hence
from \eqref{e8} it follows that $\phi^*(df)(x)\,=\, 0$. Since $(df)(x)\, \not=\, 0$, we conclude that
the homomorphism $\phi^*_x\, :\, (K_X)_x\, \longrightarrow\, V^*_x$ is the zero map.
This proves the second statement in the proposition.

To prove the converse, assume that the two statements in the proposition hold. We need to
show that $D$ is a quasi-parabolic Lie algebroid connection on $E_*$.

Since the first statement holds, $D$ produces a homomorphism
$$
{\mathbb D}_{x,i} \,\, :\,\, {\mathcal E}_{x,i}
\,\, \longrightarrow\,\, (E_x/E^i_x)\otimes\phi^*((K_X)_x)
\,\,\subset\,\, (E_x/E^i_x)\otimes V^*_x
$$
(see Lemma \ref{lem1}). Now the second statement implies that ${\mathbb D}_{x,i}\,=\, 0$. Therefore, using 
\eqref{e9} it follows that $\gamma\circ D\circ\iota_{x,i}\,=\, 0$. Hence from \eqref{e5} we know that
$D$ maps ${\mathcal E}_{x,i}$ to ${\mathcal E}_{x,i}\otimes V^*$. This completes the proof.
\end{proof}

\begin{remark}
\label{rmk:Qx1}
Observe that condition (2) of Proposition \ref{prop1} only depends on the Lie algebroid $(V,\,\phi)$ and not on 
the Lie algebroid connection nor on the parabolic vector bundle. It implies that for a Lie algebroid $(V,\,\phi)$
to admit parabolic representations in the form of quasi-parabolic $(V,\,\phi)$--connections $(E_*,\,D)$, it is 
necessary that $\phi^*|_S\,=\,0$. We will further analyze this fact from the perspective of Simpson's theory of 
$\Lambda$-modules \cite{Si3} in Section \ref{para_lambda_mod}. Observe that, if we assume the necessary 
condition $\phi_x^*\,=\,0$, then we have $\mathcal{Q}_x\,=\,V_x$ for each parabolic point $x\,\in\, S$, so the
map $\mathcal{S}_x$ becomes a map $${\mathcal S}_x\,\,:\,\, E_x\,\, \longrightarrow\,\, E_x\otimes V_x.$$ over 
parabolic points.
\end{remark}

\subsection{Parabolic Lie algebroid connections}\label{para-conn}

Let $D$ be a quasi-parabolic Lie algebroid connection on a parabolic vector
bundle $E_*$. From Proposition \eqref{prop1}(2) we have that the homomorphism
of fibers $\phi^*_x\, :\, (K_X)_x\, \longrightarrow\, V^*_x$ is the zero map for every
$x \,\in\, S$. Consequently, the homomorphism $\phi^* \,:\, K_X \,\longrightarrow\, V^*$
factors through the natural homomorphism $K_X \,\hookrightarrow\, K_X \otimes {\mathcal O}_X(S)$,
where $S\,=\,
\sum_{i=1}^m x_i$ is the reduced effective divisor; in other words, we have a unique homomorphism 
\begin{equation}\label{e12}
\widetilde{\phi}^*\,\, : \,\, K_X \otimes {\mathcal O}_X(S) \,\,\longrightarrow\,\, V^*
\end{equation}
whose restriction to the subsheaf $K_X\, \subset\, K_X \otimes {\mathcal O}_X(S)$ coincides with $\phi^*$.

Note that for every $x\, \in\, S$, we have $(K_X \otimes {\mathcal O}_X(S))_x \,=\, \mathbb{C}$
(Poincar\'e adjunction
formula \cite[p.~146]{GH}). Consider the homomorphism $q(x)\, :\, V^*_x\, \longrightarrow\, {\mathcal Q}_x$ (see
\eqref{e3}). Composing it with $\widetilde{\phi}^*(x)$ in \eqref{e12}, we have
\begin{equation}\label{e13}
\psi_x \,:\, \mathbb{C}\,=\, (K_X \otimes {\mathcal O}_X(S))_x \, \longrightarrow\, {\mathcal Q}_x.
\end{equation}

{}From \eqref{e5} we have an exact sequence
\begin{equation}\label{z0}
0\, \longrightarrow\, {\mathcal E}_{x,i+1}\, \longrightarrow\, {\mathcal E}_{x,i} \, \longrightarrow\, E^i_x/E^{i+1}_x\,
\longrightarrow\, 0
\end{equation}
for all $x\, \in\, S$ and $1\, \leq\, i\, \leq\, \ell_x$. Since the quasi-parabolic Lie algebroid connection $D$
preserves ${\mathcal E}_{x,i}$ (see \eqref{e7}), from \eqref{z0} we have a commutative diagram
\begin{equation}\label{cd2}
\begin{matrix}
0 & \longrightarrow & {\mathcal E}_{x,i+1} & \longrightarrow & {\mathcal E}_{x,i} & \longrightarrow & E^i_x/E^{i+1}_x &
\longrightarrow & 0\\
&&\,\,\, \Big\downarrow D &&\,\,\, \Big\downarrow D &&\,\,\, \Big\downarrow \widehat{D}\\
0 & \longrightarrow & {\mathcal E}_{x,i+1}\otimes V^* & \longrightarrow &
{\mathcal E}_{x,i}\otimes V^* & \longrightarrow & (E^i_x/E^{i+1}_x)\otimes V^* &\longrightarrow & 0
\end{matrix}
\end{equation}
where $\widehat{D}$ is induced by $D$. Since $D$ satisfies \eqref{e-4}, we have
\begin{equation}\label{z-1}
\widehat{D}(fs) \,=\, f\cdot\widehat{D}(s) + s\otimes \phi^*(df)
\end{equation}
for any section $s$ of $E^i_x/E^{i+1}_x$ (note that
$E^i_x/E^{i+1}_x$ is a torsion sheaf supported at $x$) and any holomorphic function $f$ defined around $x\, \in\, X$.
Consider the composition of maps
\begin{equation}\label{z1}
({\rm Id}_{E^i_x/E^{i+1}_x}\times q)\circ \widehat{D}\,\,:\,\, E^i_x/E^{i+1}_x \,\,
\longrightarrow\,\, (E^i_x/E^{i+1}_x)\otimes {\mathcal Q}_x,
\end{equation}
where $q$ is the homomorphism in \eqref{e3}. From \eqref{z-1} it follows that
$({\rm Id}_{E^i_x/E^{i+1}_x}\times q)\circ \widehat{D}$
has the property $$({\rm Id}_{E^i_x/E^{i+1}_x}\times q)\circ \widehat{D}(fs) \,\,=\,\, f\cdot
({\rm Id}_{E^i_x/E^{i+1}_x}\times q)\circ\widehat{D}(s),$$ and hence
$({\rm Id}_{E^i_x/E^{i+1}_x}\times q)\circ \widehat{D}$
 produces a $\mathbb C$--linear homomorphism
\begin{equation}\label{z2}
\widehat{D}_{i,x}\,\,:\,\, E^i_x/E^{i+1}_x \,\, \longrightarrow\,\, (E^i_x/E^{i+1}_x)\otimes {\mathcal Q}_x
\end{equation}
of fibers.

\begin{definition}\label{def:paraLieconn}
The quasi-parabolic Lie algebroid connection $D$ will be called a \textit{parabolic Lie algebroid connection} if
the homomorphism $\widehat{D}_{i,x}$ in \eqref{z2} coincides with the homomorphism
$E^i_x/E^{i+1}_x \,\, \longrightarrow\,\, (E^i_x/E^{i+1}_x)\otimes {\mathcal Q}_x$ defined by $v\, \longmapsto\, v\otimes
(\psi_x(\alpha^x_i))$, where $\psi_x$ is the homomorphism in \eqref{e13} and $\alpha^x_i$ is the parabolic weight
in \eqref{ew}.
\end{definition}

\subsection{The Atiyah exact sequence} 

Let $(V, \, \phi)$ be a Lie algebroid on $X$. 
We first define generalized Lie algebroid connections on a vector bundle over an open subset of $X$.

Take an open subset $U\, \subset\, X$, and denote $S_U\, :=\, U \bigcap S$. Fix
a holomorphic function $w$ on $U$. The restriction $(V,\, \phi)\big\vert_U$
of the Lie algebroid $(V,\, \phi)$ to the open subset $U$ will be denoted by $(V_U,\, \phi_U)$.
A {\it generalized Lie algebroid connection with weight $w$} on a holomorphic vector
bundle ${\mathcal W}$ defined over $U$ is a holomorphic differential operator 
$$D \,\,:\,\, {\mathcal W} \,\,\longrightarrow \,\, {\mathcal W} \otimes V^*_U $$
satisfying the Leibniz rule
\begin{equation}\label{e15}
D(fs) \,\,=\,\, f D(s)\, +\, w \cdot s \otimes \phi^*_U (df),
\end{equation}
where $f$ is any locally defined holomorphic function on $U$ and $s$ is any locally
defined holomorphic section of $\mathcal W$.
Note that $D$ is a Lie algebroid connection on $\mathcal W$ if $w \,\equiv\, 1$.

A generalized Lie algebroid connection on a holomorphic vector 
$\mathcal W$ defined over $U$ is a pair $(w,\, D)$, where $w$ is a holomorphic function
on $U$ and $D$ is a generalized Lie algebroid connection on $\mathcal W$ with weight $w$. 

Take a generalized Lie algebroid connection $(w,\, D)$ on ${\mathcal W}\, \longrightarrow\, U$. Consider
the homomorphism
$$
({\rm Id}_{\mathcal W}\otimes q)\circ D\,\, :\,\, {\mathcal W}\,\, \longrightarrow\,\, {\mathcal W}\otimes
{\mathcal Q}\big\vert_U,
$$
where $q$ is the projection in \eqref{e3}. From \eqref{e15} it follows immediately that $({\rm 
Id}_{\mathcal W}\otimes q)\circ D (f\cdot s)\,=\, f\cdot q\circ D (s)$, where $s$ is any locally defined 
holomorphic section of $\mathcal W$ and $f$ is any locally defined holomorphic function on $U$. Therefore,
$({\rm Id}_{\mathcal W}\otimes q)\circ D$ produces a $\mathbb C$--linear homomorphism of fibers
\begin{equation}\label{e16}
{\mathcal S}_x\,\,:\,\, {\mathcal W}_x\, \,\longrightarrow\, \,{\mathcal W}_x\otimes {\mathcal Q}_x
\end{equation}
for every point $x \,\in\, U$.

Take a holomorphic vector bundle $E$ over $X$.
Let $\mathcal{C}_{E,V}$ denote the sheaf of generalized Lie algebroid connections on $E$ over $X$.
So the sections of $\mathcal{C}_{E,V}$ over an open set $U \,\subset\, X$ are the
generalized Lie algebroid connections on $E\big\vert_U$.
It is evident that $\mathcal{C}_{E,V}$ is a locally free coherent analytic sheaf fitting in the
short exact sequence
\begin{equation}\label{y1}
0\, \longrightarrow\, \text{End}(E)\otimes V^* \, \longrightarrow\, \mathcal{C}_{E,V}
\, \stackrel{\sigma}{\longrightarrow}\, {\mathcal O}_X \, \longrightarrow\, 0;
\end{equation}
the above projection $\sigma$ sends any locally defined generalized Lie algebroid connection $(w,\, D)$
on $E\big\vert_U$ to the holomorphic function $w$.

Let $E_*\,=\, (E,\, \{\{E^i_x\}_{i=1}^{\ell_x}\}_{x\in S},\, \{\{\alpha^x_i\}_{i=1}^{\ell_x}\}_{x\in S})$
be a parabolic vector bundle on $X$. Take a Lie algebroid $(V, \, \phi)$ on $X$ satisfying the following condition:
For every $x\, \in\, S$, the homomorphism of fibers $\phi^*_x\, :\, (K_X)_x\, \longrightarrow\,
V^*_x$ (see \eqref{e2}) is the zero map.

Take an open subset $U\, \subset\, X$. A \textit{generalized quasi-parabolic Lie algebroid connection} on $E_*$
over $U$ is a generalized Lie algebroid connection
$$
D\,\,:\,\, E\big\vert_U \,\,\longrightarrow\, \, (E\otimes V^*)\big\vert_U
$$
on $E\big\vert_U$ (see \eqref{e-4}) such that
\begin{equation}\label{q1}
D({\mathcal E}_{x,i}\big\vert_U) \,\, \subset\, \, ({\mathcal E}_{x,i}\otimes V^*)\big\vert_U
\end{equation}
for all $x\, \in\, S\bigcap U$ and every $1\, \leq\, i\, \leq\, \ell_x+1$ (see \eqref{e5}).

The following is a generalization of Proposition \ref{prop1}.

\begin{lemma}\label{lem3}
A generalized Lie algebroid connection $(w,\, D)$ on $E\big\vert_U$
gives a generalized quasi-parabolic Lie algebroid connection on the
parabolic vector bundle $E_*\big\vert_U$ if and only if the following holds:
For every $x\, \in\, S\bigcap U$, the homomorphism
$$
{\mathcal S}_x\,\,:\,\, E_x\, \,\longrightarrow\, \, E_x\otimes {\mathcal Q}_x
$$
in \eqref{e16} maps $E^i_x\, \subset\, E_x$ to
$E^i_x\otimes {\mathcal Q}_x$ for all $1\, \leq\, i\, \leq\, \ell_x+1$.
\end{lemma}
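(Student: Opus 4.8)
The plan is to reduce everything to the proof of Proposition~\ref{prop1}, exploiting the standing hypothesis of this subsection that $\phi^*_x\colon (K_X)_x\to V^*_x$ is the zero map for every $x\in S$. This is exactly statement~(2) of Proposition~\ref{prop1}, which there had to be \emph{deduced}; here it is \emph{assumed}, and that is what makes the nontrivial direction collapse. A preliminary observation, used throughout, is that the extra weight $w$ in the generalized Leibniz rule~\eqref{e15} plays no role at the parabolic points: the term $w\cdot s\otimes \phi^*_U(df)$ it governs involves $\phi^*$, which vanishes on fibers over $S$, so at any $x\in S\cap U$ the operator $D$ behaves just like an ordinary Lie algebroid connection satisfying~\eqref{e-4}. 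In particular the fiberwise map ${\mathcal S}_x$ of~\eqref{e16} is well defined exactly as in~\eqref{e4}, since $q\circ\phi^*=0$ kills the offending term regardless of $w$.

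For the forward implication, suppose $(w,D)$ is a generalized quasi-parabolic Lie algebroid connection, so that $D({\mathcal E}_{x,i}|_U)\subset ({\mathcal E}_{x,i}\otimes V^*)|_U$ for all $x\in S\cap U$. Fixing such an $x$ and an index $i$, I would take any $v\in E^i_x$ and, using the exact sequence~\eqref{e5}, choose a local section $s$ of ${\mathcal E}_{x,i}$ with $s(x)=v$. Then $D(s)$ is a section of ${\mathcal E}_{x,i}\otimes V^*$, so applying ${\rm Id}\otimes q$ and evaluating at $x$ lands in $({\mathcal E}_{x,i}\otimes{\mathcal Q})_x$, whose image in $E_x\otimes{\mathcal Q}_x$ is contained in $E^i_x\otimes{\mathcal Q}_x$ because the fiber map $({\mathcal E}_{x,i})_x\to E_x$ has image $E^i_x$. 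By construction this image is precisely ${\mathcal S}_x(v)$, so ${\mathcal S}_x(E^i_x)\subset E^i_x\otimes{\mathcal Q}_x$. This argument is word-for-word the first half of Proposition~\ref{prop1} and never touches $w$.

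For the converse, assume ${\mathcal S}_x$ preserves the filtration for every $x\in S\cap U$. I would first note that Lemma~\ref{lem1} applies unchanged to $(w,D)$: its hypothesis is exactly the preservation of the filtration by ${\mathcal S}_x$, and the verification that the induced map ${\mathbb D}_{x,i}$ satisfies ${\mathbb D}_{x,i}(f\cdot s)=f(x){\mathbb D}_{x,i}(s)$ uses only that a section $s$ of ${\mathcal E}_{x,i}$ has vanishing image in $E_x/E^i_x$, which annihilates the weighted term of~\eqref{e15} under the projection $\gamma$ of~\eqref{e9}. Thus the hypothesis yields a homomorphism ${\mathbb D}_{x,i}\colon {\mathcal E}_{x,i}\to (E_x/E^i_x)\otimes \phi^*((K_X)_x)$. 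Here the decisive simplification enters: since $\phi^*_x=0$, the space $\phi^*((K_X)_x)$ is zero, so ${\mathbb D}_{x,i}=0$ for trivial reasons. Reading this off the diagram~\eqref{e9} gives $\gamma\circ D\circ\iota_{x,i}=0$, which by exactness of the bottom row is equivalent to $D({\mathcal E}_{x,i}|_U)\subset ({\mathcal E}_{x,i}\otimes V^*)|_U$. The only genuinely new point relative to Proposition~\ref{prop1}---and the one I expect to require the most care---is checking this weight bookkeeping: confirming that at each parabolic point the assumption $\phi^*_x=0$ collapses the $w$-dependent Leibniz term to zero, so that the entire diagrammatic argument, and Lemma~\ref{lem1} with it, transfers intact from~\eqref{e-4} to the generalized rule~\eqref{e15} over the open set $U$.
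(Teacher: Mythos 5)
Your proposal is correct and follows essentially the route the paper intends: the forward direction is the immediate argument from \eqref{q1} (identical to the first half of Proposition \ref{prop1}), and your converse is precisely the ``very similar to Proposition \ref{prop1}'' argument whose details the paper omits, namely running Lemma \ref{lem1} for the generalized Leibniz rule \eqref{e15} and using the standing hypothesis $\phi^*_x=0$ for $x\in S$ to force ${\mathbb D}_{x,i}=0$. Your bookkeeping of the weight $w$ (it is killed by $q\circ\phi^*=0$ for well-definedness of ${\mathcal S}_x$, and by $\widehat\gamma(s(x))=0$ in the Lemma \ref{lem1} computation) is exactly the check needed, so nothing is missing.
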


\begin{proof}
If $(w,\, D)$ is a generalized quasi-parabolic Lie algebroid connection, then for
any $x\, \in\, S\bigcap U$, from \eqref{q1}
it follows immediately that ${\mathcal S}_x(E^i_x)\, \subset\, 
E^i_x\otimes {\mathcal Q}_x$ for all $1\, \leq\, i\, \leq\, \ell_x+1$.

The proof of the converse statement is very similar to the proof in Proposition \ref{prop1}.
The details are omitted.
\end{proof}

Let $(w,\, D)$ be a generalized quasi-parabolic Lie algebroid connection on $E_*\big\vert_U$.
Then using \eqref{cd2} it follows that the homomorphisms ${\mathcal S}_x\big\vert_{E^i_x}$,
$1\,\leq\, i\, \leq\, \ell_x$ (see Lemma \ref{lem3}), produce homomorphisms
\begin{equation}\label{z3}
\widehat{D}_{i,x}\,\,:\,\, E^i_x/E^{i+1}_x \,\, \longrightarrow\,\, (E^i_x/E^{i+1}_x)\otimes {\mathcal Q}_x
\end{equation}
(see \eqref{z2}).

A \textit{generalized parabolic Lie algebroid connection} on $E_*\big\vert_U$ is
a generalized quasi-parabolic Lie algebroid connection $(w,\, D)$ on $E_*\big\vert_U$ such that
for every $x\, \in\, S\bigcap U$, and every $1\,\leq\, i\, \leq\, \ell_x$, the homomorphism
$\widehat{D}_{i,x}$ in \eqref{z3} coincides with the homomorphism
$$E^i_x/E^{i+1}_x \,\, \longrightarrow\,\, (E^i_x/E^{i+1}_x)\otimes {\mathcal Q}_x$$ defined by
$v\, \longmapsto\, w(x)\cdot v\otimes (\psi_x(\alpha^x_i))$, where $\psi_x$ is the homomorphism
in \eqref{e13} and $\alpha^x_i$ is the parabolic weight in \eqref{ew}.

The sheaf of generalized parabolic Lie algebroid connections on $E_*$ is evidently a subsheaf
of $\mathcal{C}_{E,V}$ (see \eqref{y1}). In fact, it is a coherent analytic subsheaf
of $\mathcal{C}_{E,V}$. The sheaf of generalized parabolic Lie algebroid connections on $E_*$
will be denoted by $\mathcal{C}_{E_*,V}$.

\begin{lemma}\label{lem4}
The sheaf of generalized parabolic Lie algebroid connections on $E_*$, namely $\mathcal{C}_{E_*,V}$,
fits in the following short exact sequence of holomorphic vector bundles on $X$:
\begin{equation}
\label{Atiya_para_Lie}
0\, \longrightarrow\, {\rm End}_n(E)\otimes V^* \, \longrightarrow\, \mathcal{C}_{E_*,V}
\, \stackrel{\sigma}{\longrightarrow}\, {\mathcal O}_X \, \longrightarrow\, 0,
\end{equation}

where ${\rm End}_n(E)$ is defined in \eqref{pe2}.
\end{lemma}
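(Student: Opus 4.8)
The goal is to prove Lemma~\ref{lem4}, which asserts that $\mathcal{C}_{E_*,V}$ sits in the short exact sequence \eqref{Atiya_para_Lie}. I will prove this by restricting the Atiyah-type sequence \eqref{y1} for $\mathcal{C}_{E,V}$ to the subsheaf $\mathcal{C}_{E_*,V}$ and identifying the kernel of $\sigma$ on this subsheaf with $\mathrm{End}_n(E)\otimes V^*$. Let me outline the plan.

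**Main steps.**

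The plan is as follows. First I would observe that $\mathcal{C}_{E_*,V}\subset\mathcal{C}_{E,V}$, so the restriction of the projection $\sigma$ from \eqref{y1} gives a map $\sigma\colon\mathcal{C}_{E_*,V}\to\mathcal{O}_X$. I would show this restricted $\sigma$ is surjective: locally one must produce a generalized parabolic Lie algebroid connection with any prescribed weight $w$, which I would do by exhibiting, on a small enough open set, an explicit connection built from a local holomorphic frame compatible with the parabolic filtration and then scaling so that $\sigma$ returns the desired $w$; the parabolic conditions of Definition of a generalized parabolic connection are checked pointwise using \eqref{e16} and \eqref{z3}. Second, and this is the crux, I would identify $\ker(\sigma)$ restricted to $\mathcal{C}_{E_*,V}$. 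A section of $\ker(\sigma)$ in $\mathcal{C}_{E,V}$ is a generalized connection with $w\equiv 0$, i.e.\ by \eqref{e15} an $\mathcal{O}_X$-linear homomorphism $E\to E\otimes V^*$, so a section of $\mathrm{End}(E)\otimes V^*$. I must determine exactly which endomorphism-valued sections lie in $\mathcal{C}_{E_*,V}$, i.e.\ satisfy both the quasi-parabolic condition \eqref{q1} and the graded condition from the definition of generalized parabolic connection with $w=0$.

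**The kernel computation.**

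For $w\equiv 0$ the Leibniz rule \eqref{e15} shows $D$ is $\mathcal{O}_X$-linear, hence $D\in\mathrm{End}(E)\otimes V^*$; write $D=\Theta$. The quasi-parabolic condition \eqref{q1} forces $\Theta(\mathcal{E}_{x,i})\subset\mathcal{E}_{x,i}\otimes V^*$, which at each parabolic point $x\in S$ translates, via the sheaf description \eqref{pe}, into the requirement that the fiberwise endomorphism part of $\Theta$ preserves the filtration $\{E^i_x\}$; so $\Theta$ lies in $\mathrm{End}_P(E)\otimes V^*$. Then the graded condition from the definition of a generalized parabolic connection, evaluated with $w(x)=0$, forces the induced map $\widehat{D}_{i,x}$ in \eqref{z3} to equal the map $v\mapsto w(x)\cdot v\otimes\psi_x(\alpha^x_i)=0$ on each graded piece $E^i_x/E^{i+1}_x$. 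Since $\widehat{D}_{i,x}$ is essentially the action induced by $\Theta$ on the graded piece, its vanishing means $\Theta(x)$ maps $E^i_x$ into $E^{i+1}_x\otimes V^*_x$; comparing with \eqref{pe2}, this is precisely the defining condition of $\mathrm{End}_n(E)$. Conversely any section of $\mathrm{End}_n(E)\otimes V^*$ gives a generalized connection with $w\equiv 0$ satisfying both conditions, hence lies in $\ker(\sigma)\cap\mathcal{C}_{E_*,V}$. This establishes $\ker(\sigma|_{\mathcal{C}_{E_*,V}})=\mathrm{End}_n(E)\otimes V^*$ and yields the exact sequence \eqref{Atiya_para_Lie}.

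**Expected obstacle and local freeness.**

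I expect the main obstacle to be the careful bookkeeping in the kernel identification: one must verify that the pointwise conditions at the parabolic points, phrased in terms of the fiber maps $\mathcal{S}_x$ and $\widehat{D}_{i,x}$, exactly match the sheaf-level definitions \eqref{pe} and \eqref{pe2} of $\mathrm{End}_P(E)$ and $\mathrm{End}_n(E)$, rather than something slightly larger or smaller. Once the three terms are identified, exactness in the middle is automatic from the definitions, and exactness at the ends follows from surjectivity of $\sigma$ and the injectivity of the inclusion. Finally, that $\mathcal{C}_{E_*,V}$ is a holomorphic vector bundle (locally free) follows because it is an extension of the line bundle $\mathcal{O}_X$ by the locally free sheaf $\mathrm{End}_n(E)\otimes V^*$; an extension of locally free sheaves is locally free, so no separate argument for local freeness is needed beyond noting that $\mathrm{End}_n(E)$ is locally free, which is standard for the parabolic endomorphism subsheaves.
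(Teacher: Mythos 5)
Your proposal is correct and follows essentially the same route as the paper: the paper phrases the kernel identification as the statement that two generalized quasi-parabolic connections with the same weight differ by a section of $\mathrm{End}_P(E)\otimes V^*$, refined to $\mathrm{End}_n(E)\otimes V^*$ after comparing residues, which is exactly your computation of $\ker(\sigma)$. The only point worth keeping explicit is that the vanishing of $\widehat{D}_{i,x}$ forces $\Theta(x)(E^i_x)\subset E^{i+1}_x\otimes V^*_x$ on the nose (rather than only modulo $\phi^*((K_X)_x)$) because of the standing assumption $\phi^*_x=0$ for $x\in S$, so that $\mathcal{Q}_x=V^*_x$; you also supply the local surjectivity of $\sigma$ and the local freeness of $\mathcal{C}_{E_*,V}$, which the paper leaves implicit.
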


\begin{proof}
Restrict the homomorphism $\sigma$ in \eqref{y1} to the subsheaf
${\mathcal C}_{E_*,V}\, \subset\, {\mathcal C}_{E,V}$. The projection $\sigma$ in the lemma is
this restriction. Next note that if $D_1$ and $D_2$ are two
generalized quasi-parabolic Lie algebroid connections on $E_*\big\vert_U$ such that
$\sigma(D_1)\,=\, \sigma(D_2)$, then we have
$$D_1-D_2\, \in\, H^0(U,\, (\text{End}_P(E)\otimes V^*)\big\vert_U),$$ where $\text{End}_P(E)$ is defined
in \eqref{pe}. Furthermore, for any $s\, \in\, H^0(U,\, (\text{End}_P(E)\otimes V^*)\big\vert_U)$, it is
evident that $D_1+s$
is a generalized quasi-parabolic Lie algebroid connection on $E_*\big\vert_U$.

Now, by comparing the residues at the parabolic points, it is straightforward to check that if $D_1$ is a
generalized parabolic Lie algebroid connection on $E_*\big\vert_U$, then
$D_1+s$ is a generalized parabolic Lie algebroid connection on $E_*\big\vert_U$ if and only if
$s\, \in\, H^0\left(U,\, (\text{End}_n(E)\otimes V^*)\big\vert_U \right)$.
\end{proof}

{}From the definition of $\mathcal{C}_{E_*,V}$ it follows immediately that a
parabolic Lie algebroid connection on $E_*\big\vert_U$ is precisely a holomorphic
splitting of the short exact sequence in Lemma \ref{lem4} over $U\, \subset\, X$. More precisely, if
$$
\tau\, \, :\,\, {\mathcal O}_U\,\, \longrightarrow\,\, \mathcal{C}_{E_*,V}
$$
is a holomorphic splitting of the short exact sequence in Lemma \ref{lem4} over $U\,
\subset\, X$ (so $\sigma\circ\tau\,=\, {\rm Id}_{{\mathcal O}_U}$), then $\tau(1)$ is a parabolic Lie
algebroid connection on $E_*\big\vert_U$.
Conversely, if $D$ is a parabolic Lie algebroid connection on $E_*\big\vert_U$, then
we have a holomorphic splitting
$$
\tau'\, \, :\,\, {\mathcal O}_U\,\, \longrightarrow\,\, \mathcal{C}_{E_*,V}
$$
of the short exact sequence in Lemma \ref{lem4} over $U\, \subset\, X$ which is
uniquely determined by the condition that $\tau'(1)\,=\, D$.

The short exact sequence in Lemma \ref{lem4} will be called the \textit{Atiyah exact sequence}
of $E_*$ for the Lie algebroid $(V, \, \phi)$.

\section{A criterion for parabolic Lie algebroid connections}
\label{Crit_para_Lie_conn}
Let $(V,\, \phi)$ be a Lie algebroid on $X$ such that for all $x\, \in\, S$, the homomorphism of
fibers $\phi^*_x\, :\, (K_X)_x\, \longrightarrow\, V^*_x$ (see \eqref{e2}) is the zero map.

Consider the short exact sequence of holomorphic vector bundles on $X$ in Lemma \ref{lem4}. The
obstruction to its holomorphic splitting is given by a class
\begin{equation}\label{ze}
\zeta\,\, \in\, \, H^1(X,\, \text{Hom}({\mathcal O}_X,\, {\rm End}_n(E)\otimes V^*))
\,\,=\,\, H^1(X,\, {\rm End}_n(E)\otimes V^*).
\end{equation}

We have ${\rm End}_n(E)^*\,=\, {\rm End}_P(E)\otimes {\mathcal O}_X(S)$, where ${\rm End}_P(E)$ is
defined in \eqref{pe}. Therefore, using Serre duality,
\begin{equation}\label{z2n}
\zeta\,\, \in\,\, H^0(X,\, {\rm End}_P(E)\otimes V\otimes K_X)^*
\end{equation}
(see \eqref{ze}). In other words, for every parabolic vector bundle $E_*$ there is a naturally
associated homomorphism
\begin{equation}\label{z3n}
\zeta\, \,:\,\, H^0(X,\, {\rm End}_P(E)\otimes V\otimes K_X) \,\, \longrightarrow\,\, {\mathbb C}.
\end{equation}

The following proposition is a parabolic analog of \cite[Proposition 3.1]{BKS}.

\begin{proposition}\label{prop2}
Assume that the following two conditions hold:
\begin{enumerate}
\item The vector bundle $V$ is stable, and

\item ${\rm rank}(V)\,\, \geq\,\, 2$.
\end{enumerate}
Then any parabolic vector bundle $E_*$ admits a parabolic Lie algebroid connection.
\end{proposition}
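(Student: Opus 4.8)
The plan is to follow the strategy of \cite[Proposition 3.1]{BKS}, adapted to the parabolic setting: I would show that the obstruction functional $\zeta$ of \eqref{z3n} is identically zero, so that the Atiyah exact sequence \eqref{Atiya_para_Lie} of Lemma \ref{lem4} admits a holomorphic splitting and hence, as explained after Lemma \ref{lem4}, $E_*$ carries a parabolic Lie algebroid connection. Everything thus reduces to proving that $\langle\zeta,\,\psi\rangle\,=\,0$ for every $\psi\,\in\, H^0(X,\, \text{End}_P(E)\otimes V\otimes K_X)$.

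First I would reduce the computation of $\zeta$ to the case of ordinary parabolic connections by means of the anchor. The sequence \eqref{Atiya_para_Lie} for $(V,\,\phi)$ is the pushout, along the homomorphism induced by $\widetilde{\phi}^*$ in \eqref{e12}, of the corresponding Atiyah sequence for the logarithmic Lie algebroid $(TX(-S),\, i)$; the splitting obstruction of the latter is the classical parabolic-connection obstruction, which I denote by $\zeta_{\log}$. Dualizing this factorization through Serre duality, the functional $\zeta$ takes the form $\psi\,\longmapsto\,\langle\zeta_{\log},\, c_\phi(\psi)\rangle$, where
$$
c_\phi\,:\, H^0(X,\, \text{End}_P(E)\otimes V\otimes K_X)\,\longrightarrow\, H^0(X,\, \text{End}_P(E))
$$
contracts the factor $V\otimes K_X$ to $\mathcal{O}_X$ through the anchor $\phi\,:\,V\,\longrightarrow\,TX$, and $\langle\zeta_{\log},\,-\rangle$ is the Serre-duality pairing of $\zeta_{\log}$ against $H^0(X,\, \text{End}_P(E))$.

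The heart of the argument — and the only place where the two hypotheses on $V$ are genuinely used — is the following vanishing. For any globally defined parabolic endomorphism $q\,\in\, H^0(X,\, \text{End}_P(E))$, the contraction commutes with multiplication by $q$ and with the trace over $E$, giving
$$
\text{tr}\big(q\cdot c_\phi(\psi)\big)\,=\,\langle\phi,\, \text{tr}_E(q\,\psi)\rangle,
$$
with $\text{tr}_E(q\,\psi)\,\in\, H^0(X,\, V\otimes K_X)$. Now $H^0(X,\, V\otimes K_X)\,=\,\text{Hom}(TX,\, V)$ and $\phi\,\in\,\text{Hom}(V,\, TX)$, so this pairing is the scalar obtained from the composite $TX\,\longrightarrow\, V\,\longrightarrow\, TX$; were it nonzero it would exhibit the line bundle $TX$ as a direct summand of $V$, which is impossible because $V$ is stable — hence indecomposable — of rank $\geq 2$. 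Therefore $\text{tr}(q\cdot c_\phi(\psi))\,=\,0$ for every such $q$ and every $\psi$; in other words all ``block traces'' of $c_\phi(\psi)$ vanish. I expect this no-summand step to be the main obstacle, in the sense that it is precisely where conditions (1) and (2) enter; the remaining work is formal bookkeeping with the contraction.

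To conclude, I would decompose $E_*$ by Krull--Schmidt into indecomposable parabolic summands $E_{*,j}$ with associated parabolic projectors $p_j\,\in\, H^0(X,\, \text{End}_P(E))$. Because the obstruction is additive over direct summands and the parabolic endomorphism algebra of each indecomposable block is local, the pairing $\langle\zeta_{\log},\,-\rangle$ depends on $\eta$ only through the block traces $\text{tr}(p_j\eta)$, reducing to a combination of the parabolic degrees of the $E_{*,j}$ weighted by $\text{tr}(p_j\eta)/\operatorname{rk}(E_j)$; this is exactly the mechanism underlying \cite[Theorem 1.1]{BL} (the functional vanishes identically iff every summand has parabolic degree zero). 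Verifying this reduction — in particular that $\langle\zeta_{\log},\,-\rangle$ kills the nilpotent radical of each local block — is the careful part of the bookkeeping. Applying the formula to $\eta\,=\,c_\phi(\psi)$ and invoking the previous paragraph, where $\text{tr}(p_j\cdot c_\phi(\psi))\,=\,0$ for all $j$, yields $\langle\zeta,\,\psi\rangle\,=\,\langle\zeta_{\log},\, c_\phi(\psi)\rangle\,=\,0$ for every $\psi$. Hence $\zeta\,=\,0$, the sequence \eqref{Atiya_para_Lie} splits, and $E_*$ admits a parabolic Lie algebroid connection, with no hypothesis on the parabolic degree of $E_*$.
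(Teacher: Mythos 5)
Your argument is correct in outline, but it is a genuinely different route from the one in the paper. The paper proves the vanishing of $\zeta$ by a trichotomy: if $\deg(V)\ge \operatorname{rank}(V)(\deg(TX)-m)$, stability of $V$ with $\operatorname{rank}(V)\ge 2$ forces $\operatorname{Hom}(V,TX(-S))=0$, hence $\phi=0$ and $D\equiv 0$ is already a parabolic Lie algebroid connection; if the degree inequality is reversed and $E_*$ is parabolic semistable, a slope comparison with the parabolic tensor product $E_*\otimes(V\otimes K_X)_*$ shows the whole space $H^0(X,\operatorname{End}_P(E)\otimes V\otimes K_X)$ vanishes; and if $E_*$ is unstable, every $\theta$ is nilpotent with respect to the Harder--Narasimhan filtration \eqref{y1n} and one kills $\zeta(\theta)$ via the lifted class $\zeta^F$ in \eqref{y4}, as in \cite[Proposition 3.1]{BKS}. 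You instead avoid all case analysis by factoring $\zeta=(\mathrm{id}\otimes\widetilde{\phi}^*)_*\zeta_{\log}$ through the logarithmic obstruction and observing that the relevant scalar is the trace of a composite $TX(-S)\to V\to TX(-S)$, which must vanish because a stable bundle of rank $\ge 2$ is simple and a nonzero such composite would produce a nontrivial idempotent in $\operatorname{End}(V)$; this isolates very cleanly where hypotheses (1) and (2) enter and explains why no degree condition on $E_*$ is needed. The trade-off is that you rely on two inputs the paper does not use: the pushout identification of \eqref{Atiya_para_Lie} along $\widetilde{\phi}^*$ (true, and straightforward from the definitions in Section 3, but asserted rather than checked — in particular one must verify that the residue condition of Definition \ref{def:paraLieconn} is exactly the pushforward of the logarithmic one via $\psi_x$), and the full Atiyah--Weil structure of $\langle\zeta_{\log},-\rangle$ — additivity over the Krull--Schmidt blocks and vanishing on the nilpotent radical of $H^0(X,\operatorname{End}_P(E))$. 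The latter is the substantive content of \cite{BL} rather than ``bookkeeping,'' but since you invoke \cite{BL} for it the argument closes. Both proofs are valid; the paper's is more self-contained, yours is more uniform and conceptually sharper about the role of simplicity of $V$.
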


\begin{proof}
First assume that $$\text{degree}(V) \, \geq\, {\rm rank}(V)\cdot (\text{degree}(TX)-\# S)\,=\,
{\rm rank}(V)\cdot (\text{degree}(TX)-m).$$
Since
$V$ is a stable vector bundle of rank at least two, from this we conclude that there is no nonzero
homomorphism from $V$ to $TX\otimes {\mathcal O}_X(-S)$, or in other words, there is no nonzero
homomorphism from $V$ to $TX$ that vanishes over $S$. In particular, the homomorphism $\phi$ is
the zero map (note that $\phi$ vanishes over $S$ because $\phi^*$ vanishes over $S$). Therefore, a parabolic
Lie algebroid connection on $E_*$ is simply
a strongly parabolic Higgs field on $E_*$. In particular, $D\,\equiv\, 0$ is a parabolic Lie algebroid
connection on $E_*$. Hence $E_*$ admits a parabolic Lie algebroid connection.

Next assume that the following two conditions hold:
\begin{enumerate}
\item $\text{degree}(V) \,< \, {\rm rank}(V)\cdot (\text{degree}(TX)-m)$, and

\item the parabolic vector bundle $E_*$ is parabolic semistable.
\end{enumerate}

{}From the above inequality $\text{degree}(V) \,< \, {\rm rank}(V)\cdot (\text{degree}(TX)-m)$ it follows
immediately that
\begin{equation}\label{z4}
\text{degree}(V\otimes K_X) \, \, <\,\, 0.
\end{equation}
Also, the vector bundle $V\otimes K_X$ is stable because $V$ is stable. Equip $V\otimes K_X$ with the
trivial parabolic structure, meaning it has no nonzero parabolic weights. The parabolic vector
bundle obtained this way will be denoted by $(V\otimes K_X)_*$. Note that $(V\otimes K_X)_*$ is parabolic
stable because $V\otimes K_X$ is stable and $(V\otimes K_X)_*$ has no nonzero parabolic weights.

Since $E_*$ is parabolic semistable, and $(V\otimes K_X)_*$ is parabolic stable, it follows that
the parabolic tensor product $E_*\otimes (V\otimes K_X)_*$ is parabolic semistable. We have
$$
\text{par-}\mu(E_*\otimes (V\otimes K_X)_*)\,\,=\,\, \text{par-}\mu(E_*) + \text{par-}\mu((V\otimes K_X)_*)
$$
$$
=\,\,
\text{par-}\mu(E_*) +\mu(V\otimes K_X)\,\, <\,\, \text{par-}\mu(E_*)
$$
(see \eqref{z4}). Since both $E_*\otimes (V\otimes K_X)_*$ and $E_*$ are parabolic semistable, this
implies that there is no nonzero parabolic homomorphism from $E_*$ to $E_*\otimes (V\otimes K_X)_*$.
In other words, we have
$$
H^0(X,\, {\rm End}_P(E)\otimes V\otimes K_X)\,=\, H^0(X,\, {\rm Hom}(E_*,\, E_*\otimes (V\otimes K_X)_*))
\,=\, 0.
$$
Consequently, the homomorphism $\zeta$ in \eqref{z3n} is the zero map. So $\zeta$ in \eqref{z2n}
satisfies the condition that $\zeta\,=\, 0$, and hence $\zeta$ in \eqref{ze} vanishes. 
This means that the short exact sequence of holomorphic vector bundles on $X$ in Lemma \ref{lem4} splits
holomorphically. Therefore, $E_*$ admits a parabolic Lie algebroid connection.

Finally, assume that the following two conditions hold:
\begin{enumerate}
\item $\text{degree}(V) \,< \, {\rm rank}(V)\cdot (\text{degree}(TX)-m)$, and

\item the parabolic vector bundle $E_*$ is \textit{ not } parabolic semistable.
\end{enumerate}

Let
\begin{equation}\label{y1n}
0\, =\, F^0_* \, \subsetneq\, F^1_* \, \subsetneq\, F^2_* \, \subsetneq\, \cdots
\, \subsetneq\, F^{n-1}_* \, \subsetneq\, F^n_*\,=\, E_*
\end{equation}
be the Harder--Narasimhan filtration of the parabolic vector bundle $E_*$.

Take any
\begin{equation}\label{y2}
\theta \,\, \in\,\, H^0(X,\, {\rm End}_P(E)\otimes V\otimes K_X).
\end{equation}
We have
\begin{equation}\label{y3}
\theta(F^i_*)\, \subset\, F^{i-1}_* \otimes V\otimes K_X
\end{equation}
for all $1\, \leq\, i\, \leq\, n$ (see \eqref{y1n}) which
is deduced by simply repeating the argument in the proof of \cite[Proposition 3.1]{BKS} and
using \eqref{z4}.

Now consider $\mathcal{C}_{E_*,V}$ in Lemma \ref{lem4}. Let
$$
\mathcal{C}^F_{E_*,V}\,\, \subset\,\, \mathcal{C}_{E_*,V}
$$
be the subsheaf defined by the following condition: A holomorphic section $$s\, \in\,
H^0(U,\, \mathcal{C}_{E_*,V}\big\vert_U),$$ defined over an open subset $U\, \subset\, X$
is a section of $\mathcal{C}^F_{E_*,V}\big\vert_U$ if $s(F^i) \, \subset\, (F^i\otimes V^*)\big\vert_U$
for all $1\, \leq\, i\, \leq\, n$ (see \eqref{y1n}). Similarly, let
$$
{\rm End}^F_n(E)\,\, \subset\,\, {\rm End}_n(E)
$$
be the subsheaf defined by the following condition: A holomorphic section $$s\, \in\,
H^0(U,\, {\rm End}_n(E)\big\vert_U),$$ defined over an open subset $U\, \subset\, X$
is a section of ${\rm End}^F_n(E)\big\vert_U$ if $s(F^i) \, \subset\, F^i\big\vert_U$
for all $1\, \leq\, i\, \leq\, n$. Note that $\mathcal{C}^F_{E_*,V}$ (respectively, ${\rm End}^F_n(E)$)
is a holomorphic subbundle of $\mathcal{C}_{E_*,V}$ (respectively, ${\rm End}_n(E)$).
Then we have a commutative diagram
\begin{equation}\label{y4}
\begin{matrix}
0 & \longrightarrow & {\rm End}^F_n(E)\otimes V^* & \longrightarrow & \mathcal{C}^F_{E_*,V}
& \stackrel{\sigma}{\longrightarrow} & {\mathcal O}_X & \longrightarrow & 0\\
&&\,\,\,\Big\downarrow\beta &&\Big\downarrow &&\Big\Vert\\
0 & \longrightarrow & {\rm End}_n(E)\otimes V^* & \longrightarrow & \mathcal{C}_{E_*,V}
& \stackrel{\sigma}{\longrightarrow} & {\mathcal O}_X & \longrightarrow & 0
\end{matrix}
\end{equation}
where the vertical maps are the natural inclusion maps, the rows are exact, and the bottom exact sequence
is the one in Lemma \ref{lem4}.

{}From \eqref{y4} it follows that $\zeta$ in \eqref{ze} lies in the image of the homomorphism
$$
\beta_*\,\, :\,\, H^1(X,\, {\rm End}^F_n(E)\otimes V^*)\,\,
\longrightarrow\,\, H^1(X,\, {\rm End}_n(E)\otimes V^*)
$$
induced by $\beta$ in \eqref{y4}. More precisely,
\begin{equation}\label{t1}
\beta_*(\zeta^F)\,\,=\,\, \zeta,
\end{equation}
where $\zeta^F\, \in\, H^1(X,\, {\rm End}^F_n(E)\otimes V^*)$ is the extension class for the top
short exact sequence in \eqref{y4}.

Now using \eqref{y3} and \eqref{t1} it can be deduced that $\zeta(\theta)\,=\, 0$, where $\zeta$ is the homomorphism in
\eqref{z3n}. Indeed, this follows by simply repeating the argument in the proof
of \cite[Proposition 3.1]{BKS}. Hence $\zeta$ in \eqref{ze} vanishes. This implies that
the short exact sequence of holomorphic vector bundles on $X$ in Lemma \ref{lem4} splits
holomorphically, and consequently $E_*$ admits a parabolic Lie algebroid connection.
\end{proof}

The following proposition is a parabolic analog of \cite[Lemma 3.2]{BKS}.

\begin{proposition}\label{prop3}
Assume that ${\rm rank}(V)\,\, =\,\, 1$. Let $E_*$ be a parabolic vector bundle on $X$.
Then the following two statements hold:
\begin{enumerate}
\item If $\widetilde{\phi}^*\,\, : \,\, K_X \otimes {\mathcal O}_X(S) \,\,\longrightarrow\,\, V^*$ in \eqref{e12}
is not an isomorphism, then $E_*$ admits a parabolic Lie algebroid connection.

\item If $\widetilde{\phi}^*$ in \eqref{e12} is an isomorphism, then $E_*$ admits a parabolic Lie
algebroid connection if and only if the parabolic degree of every parabolic direct summand of $E_*$ is zero.
\end{enumerate}
\end{proposition}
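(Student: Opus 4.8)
The plan is to detect the existence of a parabolic Lie algebroid connection on $E_*$ through the splitting of the Atiyah exact sequence \eqref{Atiya_para_Lie}, exactly as in the proof of Proposition \ref{prop2}. By the discussion following Lemma \ref{lem4}, a parabolic Lie algebroid connection on $E_*$ is the same thing as a holomorphic splitting of \eqref{Atiya_para_Lie}, and such a splitting exists if and only if the obstruction class $\zeta$ in \eqref{ze} vanishes. Since ${\rm rank}(V)\,=\,1$, I set $M\,:=\,V\otimes K_X\otimes {\mathcal O}_X(S)$; this is a line bundle, and the homomorphism $\widetilde{\phi}^*$ in \eqref{e12} exhibits an isomorphism $M\,\cong\,{\mathcal O}_X$ precisely when $\widetilde{\phi}^*$ is an isomorphism. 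Using ${\rm End}_n(E)^*\,=\,{\rm End}_P(E)\otimes {\mathcal O}_X(S)$ together with Serre duality (compare \eqref{z2n}--\eqref{z3n}), the class $\zeta$ is realized as a linear functional on $H^0(X,\,{\rm End}_P(E)\otimes M)$, whose vanishing I must establish in case (1) and analyze in case (2).

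For statement (1) I would distinguish two situations. If $\phi\,=\,0$, then $\widetilde{\phi}^*\,=\,0$, so $\psi_x\,=\,0$ for every $x\,\in\, S$ (see \eqref{e13}); in this case the zero operator $D\,=\,0$ is a Lie algebroid connection satisfying \eqref{e-4} trivially, its map ${\mathcal S}_x$ vanishes so it is quasiparabolic by Proposition \ref{prop1}, and each $\widehat{D}_{i,x}$ in \eqref{z2} equals the required map $v\,\longmapsto\, v\otimes\psi_x(\alpha^x_i)\,=\,0$, so $D\,=\,0$ is a parabolic Lie algebroid connection. If instead $\phi\,\ne\,0$ but $\widetilde{\phi}^*$ is not an isomorphism, then $\widetilde{\phi}^*$ is an injective homomorphism of line bundles with nonzero torsion cokernel, forcing ${\rm degree}(V)\,<\,{\rm degree}(TX)-m$ and hence ${\rm degree}(M)\,<\,0$. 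I would then repeat the argument of Proposition \ref{prop2} verbatim, with the negative-degree line bundle $M$ in place of $V\otimes K_X$: if $E_*$ is parabolic semistable, tensoring by $M$ equipped with the trivial parabolic structure strictly lowers the parabolic slope, so $H^0(X,\,{\rm End}_P(E)\otimes M)\,=\,0$ and $\zeta\,=\,0$; if $E_*$ is not semistable, the Harder--Narasimhan filtration \eqref{y1n} together with the subsheaves $\mathcal{C}^F_{E_*,V}$ and ${\rm End}^F_n(E)$ show that every global section $\theta$ of ${\rm End}_P(E)\otimes M$ shifts \eqref{y1n} down by one step, whence $\zeta(\theta)\,=\,0$. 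In either case $\zeta\,=\,0$ and $E_*$ admits a parabolic Lie algebroid connection.

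For statement (2), $\widetilde{\phi}^*$ being an isomorphism identifies $(V,\,\phi)$, as a holomorphic vector bundle with anchor map, with $(TX(-S),\, i\colon TX(-S)\hookrightarrow TX)$. Since the notion of a (quasi)parabolic Lie algebroid connection depends only on $\phi^*$ through \eqref{e-4} and not on the Lie bracket, the discussion following \eqref{e4} shows that here ${\mathcal S}_x$ is the ordinary residue, so that a parabolic Lie algebroid connection on $E_*$ is exactly a connection on the parabolic vector bundle $E_*$ in the classical sense. The assertion then becomes the statement of \cite[Theorem 1.1]{BL}: $E_*$ admits a connection if and only if the parabolic degree of every parabolic direct summand of $E_*$ is zero. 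I would invoke \cite{BL} directly, or else argue through the functional $\zeta$ on $H^0(X,\,{\rm End}_P(E))$ (note $M\,\cong\,{\mathcal O}_X$): for the ``only if'' direction, projecting a parabolic connection onto a direct summand yields a parabolic connection on that summand, and the residue theorem for logarithmic connections, combined with the parabolic condition of Definition \ref{def:paraLieconn}, forces its parabolic degree to vanish.

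The main obstacle is precisely the borderline case $\deg(M)\,=\,0$ of statement (2), namely the ``if'' direction. Here the slope comparison used in case (1) no longer produces the vanishing of $H^0(X,\,{\rm End}_P(E))$, so $\zeta$ need not vanish for an arbitrary $E_*$ of parabolic degree zero; one must instead decompose $E_*$ into its indecomposable parabolic summands and establish the existence of a parabolic connection on each indecomposable summand of parabolic degree zero. This finer existence statement, rather than a numerical slope inequality, is the analytically substantive input, and is exactly what \cite[Theorem 1.1]{BL} supplies.
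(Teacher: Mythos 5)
Your proposal is correct and follows essentially the same route as the paper: statement (2) is reduced to \cite[Theorem 1.1]{BL} after observing that a parabolic Lie algebroid connection for an isomorphism $\widetilde{\phi}^*$ is an ordinary connection on the parabolic bundle, and statement (1) is handled by the vanishing of the obstruction class $\zeta$, using the zero connection when $\phi=0$ and, when $\phi\neq 0$, the forced inequality $\deg(V)<2(1-\operatorname{genus}(X))-m$ together with the semistable slope comparison or the Harder--Narasimhan nilpotency argument of Proposition \ref{prop2}. Your case division (by whether $\phi=0$) is a harmless reorganization of the paper's (by the sign of $\deg(V)-2(1-\operatorname{genus}(X))+m$), and your Serre-dual space $H^0(X,\mathrm{End}_P(E)\otimes V\otimes K_X\otimes\mathcal{O}_X(S))$ carries the $\mathcal{O}_X(S)$ twist coming from $\mathrm{End}_n(E)^*=\mathrm{End}_P(E)\otimes\mathcal{O}_X(S)$ that \eqref{z2n} omits, which if anything is the more careful bookkeeping and does not affect the degree estimates.
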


\begin{proof}
If $\widetilde{\phi}^*$ in \eqref{e12} is an isomorphism, then a parabolic Lie
algebroid connection on $E_*$ is a holomorphic connection on the parabolic bundle $E_*$.
Therefore, the main result of \cite{BL} is the second statement of the proposition.

To prove the first statement, assume that $\widetilde{\phi}^*$ in \eqref{e12} is not an isomorphism.

First consider the case where $\text{degree}(V)\, \geq\, 2(1-\text{genus}(X))-m$. Note
that
$$
H^0(X,\, \text{Hom}(K_X\otimes{\mathcal O}_X(S),\, V^*)\,=\, 0
$$
if $\text{degree}(V)\, > \, 2(1-\text{genus}(X))-m$. Also, any nonzero homomorphism
$K_X\otimes{\mathcal O}_X(S)\,\longrightarrow\, V^*$ is an isomorphism if
$\text{degree}(V)\, = \, 2(1-\text{genus}(X))-m$. So we conclude that $\widetilde{\phi}^*\,=\, 0$.
Hence any strongly parabolic Higgs field on $E_*$, in particular the zero Higgs field, is
a parabolic Lie algebroid connection on $E_*$.

Now assume that
\begin{equation}\label{z5}
\text{degree}(V)\, < \, 2(1-\text{genus}(X))-m.
\end{equation}
We also assume that $\phi\,\,\not=\,\, 0$, because if $\phi\,=\, 0$, then, as before, $E_*$ has a
parabolic Lie algebroid connection given by the zero Higgs field.

Take any
$$
\theta \,\, \in\,\, H^0(X,\, {\rm End}_P(E)\otimes V\otimes K_X).
$$
Let
\begin{equation}\label{z6}
0\, =\, F^0_* \, \subsetneq\, F^1_* \, \subsetneq\, F^2_* \, \subsetneq\, \cdots
\, \subsetneq\, F^{n-1}_* \, \subsetneq\, F^n_*\,=\, E_*
\end{equation}
be the Harder--Narasimhan filtration of the parabolic vector bundle $E_*$. From \eqref{z5} we know that
$\text{degree}(V\otimes K_X)\, <\, 0$. Hence $\theta$ must be nilpotent with respect to the filtration
in \eqref{z6}, meaning $\theta(F^i_*)\, \subset\, F^{i-1}_*\otimes V\otimes K_X$ for all $1\, \leq\, i
\, \leq\, n$. Now, as in the proof of Proposition \ref{prop2},
this implies that $\zeta(\theta)\,=\, 0$, where $\zeta$ is the homomorphism in
\eqref{z3n}. Hence $\zeta$ in \eqref{ze} vanishes, and thus
the short exact sequence of holomorphic vector bundles on $X$ in Lemma \ref{lem4} splits
holomorphically. This implies that $E_*$ admits a parabolic Lie algebroid connection.
\end{proof}

Combining Proposition \ref{prop2} and Proposition \ref{prop3} we have the following theorem:

\begin{theorem}\label{thm1}
Assume that the vector bundle $V$ is stable. Let $E_*$ be a parabolic vector bundle on $X$.
Then the following two statements hold:
\begin{enumerate}
\item If $\widetilde{\phi}^*$ in \eqref{e12} is not an isomorphism, then $E_*$ admits a parabolic
Lie algebroid connection.

\item If $\widetilde{\phi}^*$ is an isomorphism, then $E_*$ admits a parabolic Lie algebroid connection
if and only if the parabolic degree of every parabolic direct summand of $E_*$ is zero.
\end{enumerate}
\end{theorem}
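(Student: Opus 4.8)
The plan is to reduce the theorem to the two auxiliary results just established, Proposition \ref{prop2} and Proposition \ref{prop3}, by performing a case analysis on ${\rm rank}(V)$. The key preliminary observation is that the source $K_X \otimes {\mathcal O}_X(S)$ of the homomorphism $\widetilde{\phi}^*$ in \eqref{e12} is a line bundle, whereas the target $V^*$ has rank equal to ${\rm rank}(V)$. Since an isomorphism of vector bundles forces equality of ranks, $\widetilde{\phi}^*$ can be an isomorphism only when ${\rm rank}(V) = 1$. This is precisely what aligns the dichotomy in the statement of the theorem (whether or not $\widetilde{\phi}^*$ is an isomorphism) with the rank dichotomy governing the applicability of the two propositions.

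First I would treat the case ${\rm rank}(V) \geq 2$. Here the rank mismatch just noted forces $\widetilde{\phi}^*$ to fail to be an isomorphism, so we are always in the situation of statement (1), while statement (2) is vacuously true because its hypothesis is never met. Since $V$ is assumed stable and has rank at least two, both hypotheses of Proposition \ref{prop2} hold, and that proposition directly yields a parabolic Lie algebroid connection on every parabolic vector bundle $E_*$. This settles both statements when ${\rm rank}(V) \geq 2$.

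Next I would treat the case ${\rm rank}(V) = 1$. Here Proposition \ref{prop3} applies verbatim: its part (1) is exactly statement (1) of the theorem, and its part (2) is exactly statement (2). Note that a line bundle is automatically stable, so the standing hypothesis that $V$ is stable imposes no extra condition in this case and is consistent with the assumption ${\rm rank}(V) = 1$ of Proposition \ref{prop3}.

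The only point requiring genuine care is the rank comparison showing that $\widetilde{\phi}^*$ cannot be an isomorphism once ${\rm rank}(V) \geq 2$; everything else is a direct invocation of the two propositions. Consequently I do not anticipate any real obstacle here, the mathematical content of the theorem being entirely carried by Proposition \ref{prop2} and Proposition \ref{prop3}, with the present argument serving only to package them into a single statement indexed by the isomorphism property of $\widetilde{\phi}^*$ rather than by the rank of $V$.
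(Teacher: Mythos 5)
Your proposal is correct and is essentially the paper's own argument: the paper derives Theorem \ref{thm1} by simply combining Proposition \ref{prop2} and Proposition \ref{prop3}, and your rank dichotomy (noting that $\widetilde{\phi}^*$ cannot be an isomorphism when ${\rm rank}(V)\geq 2$ because its source is a line bundle) just makes explicit how the two propositions fit together.
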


\section{Integrable parabolic connections and parabolic $\Lambda$-modules}
\label{para_lambda_mod}

Any Lie algebroid $(V,\,\phi)$ has an associated graded complex $(\bigwedge^\bullet V^* ,\, d_V)$ called
the Chevalley--Eilenberg--de Rham complex of $(V,\,\phi)$. In degree 0, the map $d_V^0\,:\, \mathcal{O}_X
\,\longrightarrow \, V^*$ is $\phi^*\circ d$ and, for any locally defined
section $\omega \,\in \,\bigwedge^nV^*$ with $n\,>\,0$, the differential
$d_V(\omega)$ is constructed as follows: Given local sections $v_1,\,\cdots,\,
v_{n+1}$ of $V$, define
$$
d_V(\omega)(v_1,\,\cdots,\,v_{n+1})\,\,=\,\,\sum_{i=1}^n (-1)^{i+1} \phi(v_i)(\omega(v_1,\,\cdots,
\,\widehat{v}_i,\,\cdots,\, v_{n+1}))
$$
$$
+\, \sum_{1\le i < j \le n+1} (-1)^{i+j} \omega([v_i,\, v_j],\, v_1,\,\cdots,\, \widehat{v}_i,\, \cdots,
\,\widehat{v}_j,\, \ldots , \, v_{n+1}). 
$$
In terms of this complex, a $(V,\,\phi)$ connection on $E$ can be described as a map $D\,:\,E\,\longrightarrow\,
E\otimes V^*$ such that
$$D(fs)\,\,=\,\,fD(s)+s\otimes d_V(f).$$
Any $(V,\,\phi)$--connection $D$ on $E$ extends naturally to a map $$D\,:\, E\otimes
\bigwedge\nolimits^\bullet V^* \,\longrightarrow\, E\otimes \bigwedge\nolimits^{\bullet +1} V^*$$
using the rule
$$D(s\otimes \omega)\,\,=\,\,D(s)\wedge\omega + s\otimes d_V(\omega).$$
We say that the connection $D$ is integrable (or flat) if
$$D\circ D\,\,=\,\,0.$$
By \cite[\S~2.3]{To2}, this is equivalent to the following. For each $v\,\in\, V$, let
$D_v \,\in\, \operatorname{End}_\mathbb{C}(E)$ be the $\mathbb{C}$--linear endomorphism of
$E$ induced by contracting the connection $D$ with $v$. Then $D$ is integrable if and only if for each
$v,\,w\,\in\, V$ we have
\begin{equation}
\label{eq:integrability}
D_{[u,\,v]}\,\,=\,\,[D_u,\,D_v].
\end{equation}

Given a Lie algebroid $(V,\,\phi)$, let $\Lambda_{V}$ denote its universal developing algebra,
which is constructed as follows
(see, for instance, \cite{To2}). Let
$$U\,\,=
$$
$$
\otimes^\bullet( \mathcal{O}_X\oplus V) / \left \langle u\otimes v - v \otimes u - [u,\,v],
\, v\otimes f - f \otimes v - \phi(v)(f) \,\big\vert\,\, f\,\in\, \mathcal{O}_X, \, u,\,v
\,\in\, V \right \rangle ,$$
where $\bigotimes^\bullet W$ denotes the tensor algebra on $W$. Let $i\,:\,\mathcal{O}_X\oplus V \,
\hookrightarrow\, U$ be the canonical inclusion and let $U^\dagger\,\subset\, U$ be the subalgebra
generated by $i(\mathcal{O}_X\oplus V)$. Then
$$\Lambda_V\,\,=\,\,U^\dagger \, /\, \left \langle i(f,0)\cdot i(g,v)-i(fg,fv) \,\big\vert\,\, f,\,g
\,\in\, \mathcal{O}_X, v\,\in\, V\right\rangle.$$
Intuitively, $\Lambda_V$ is the sheaf of associative $\mathcal{O}_X$--algebras generated by $V$ with the
relations $v\cdot w-w\cdot v\,=\,[v,\,w]$ and $v\cdot f - f \cdot v \,=\,\phi(v)(f)$ for local holomorphic sections $v$ and $w$ of $V$
and locally defined holomorphic functions $f$. The natural grading on the tensor algebra
$\bigotimes^\bullet( \mathcal{O}_X\oplus V)$ induces a filtration on the algebra $\Lambda_V$
$$\Lambda_V^0\,\subset\, \Lambda_V^1\,\subset \,\Lambda_V^2\,\subset \,\cdots \,\subset\, \Lambda_V$$
such that $\Lambda_V^0 \,=\,\mathcal{O}_X$, $\Lambda^1_V\,=\,\mathcal{O}_X\oplus V$ and
$\Lambda_V^i \cdot \Lambda_V^j \,=\, \Lambda_V^{i+j}$ for all $i,\,j$ (see \cite[\S~4]{To2} and
\cite[Lemma 2.2]{Al}). The previous construction through $U^\dagger$ ensures
that the action of $\Lambda_V^0\,=\,\mathcal{O}_X$ on $\Lambda_V$ agrees with its
$\mathcal{O}_X$--module structure. This filtration gives the algebra $\Lambda_V$ the structure of a
split quasipolynomial sheaf of rings of differential operators (in the sense of \cite[\S~2]{Si3}) and, by \cite[Theorem 1.2]{To2},
there exists a bijective correspondence between the integrable $(V,\,\phi)$--connections $D\,:\,E
\,\longrightarrow\, E\otimes V^*$ on a vector bundle $E$ and the $\Lambda_{V}$-module structures
$$\widetilde{D}\,\,:\,\,\Lambda_{V} \otimes E \,\,\longrightarrow\,\, E$$ on $E$ such that
the $\mathcal{O}_X$--module structure induced by the $\Lambda_V$--module structure coincides with the
natural $\mathcal{O}_X$--module structure of $E$.

In this framework, where Lie algebroid connections are understood as actions of a sheaf of differential operators on the bundle, a general (possibly non-integrable) $(V,\,\phi)$-connection on $E$ is equivalent to a
map of $\mathcal{O}_X$--bimodules
$$\widetilde{D}\,\,:\,\, \Lambda_V^1 \otimes_{\mathcal{O}_X} E\,\,\longrightarrow\,\, E.$$
Notice that, as $\Lambda_V^1$ generates $\Lambda_V$, any $\Lambda_V$--module structure on a vector
bundle $E$ is determined by the action of $\Lambda_V^1$ on it. However, not any action of
$\Lambda_V^1$ on a vector bundle extends to an action of $\Lambda_V$. The obstruction to this extension
is precisely the integrability \eqref{eq:integrability} of the associated $(V,\,\phi)$--connection.

Taking into account these correspondences, the description and analysis of parabolic Lie algebroid 
connections carried out in the previous sections can then be studied alternatively in the framework of 
parabolic $\Lambda$-modules. For any sheaf of ring of differential operators $\Lambda$, a notion of 
parabolic $\Lambda$-module on a marked curve $(X,\,S)$ was defined in \cite{Al} as follows:

\begin{definition}[{\cite[Definition 2.5]{Al}}]
\label{def:parabolicLambdaModule}
A parabolic $\Lambda$--module on $(X,\,S)$ is a locally free $\Lambda$--module
$\widetilde{D}\,:\,\Lambda\otimes E\,\longrightarrow\, E$ over $X$ together with a parabolic structure on $E$ over $S$
$$E_x \,=\, E^1_x \,\supsetneq\, E^2_x \,\supsetneq\, \, \cdots\, \supsetneq\, E^{\ell_x}_x\,\supsetneq\, E^{\ell_x+1}_x \,=\, 0,$$
$$0 \,\leq\, \alpha^x_1 \,< \,\cdots\, <\, \alpha^x_{\ell_x} \,< \,1,$$
such that its associated decreasing sequences of subsheaves of $E$ (see \eqref{e6} and \eqref{eq:parSubsheaves})
$$E\,=\,{\mathcal E}_{x,1}\,\supsetneq \,{\mathcal E}_{x,1}
\,\supsetneq \, \cdots \,\supsetneq\, {\mathcal E}_{x,l_x} \,\supsetneq\, {\mathcal E}_{x,l_x+1}\,=\,E(-x)$$
satisfy the condition that the image of $\Lambda \otimes \mathcal{E}_{x,i}$ under the morphism
$\widetilde{D}\,:\,\Lambda \otimes E \,\longrightarrow\, E$ lies in $\mathcal{E}_{x,i}$ for all $i\,=\,
1,\,\cdots,\, l_x+1$ and all $x\,\in\, S$.
\end{definition}

It is clear, from the construction, that having a quasi-parabolic $(V,\, \phi)$-connection $D\,:\,E\,\longrightarrow\, E\otimes V^*$ on a parabolic
vector bundle $E_*$ in the sense of Definition \ref{def:parabolicConnection} is equivalent to having a map of $\mathcal{O}_X$-bimodules
$$\widetilde{D}\,:\, \Lambda^1_V\otimes_{\mathcal{O}_X} E \,\longrightarrow \,E$$
such that $\widetilde{D}(\Lambda_V^1\otimes_{\mathcal{O}_X} \mathcal{E}_{x,i})\,\subset\,
\mathcal{E}_{x,i}$. Thus, an integrable quasi-parabolic $(V,\,\phi)$--connection in the sense of Definition \ref{def:parabolicConnection}
is equivalent to a parabolic $\Lambda_V$--module structure on $E_*$ in the sense of Definition \ref{def:parabolicLambdaModule}.

From this new framework we can see that the Lie algebroid $(V,\phi)$ must satisfy certain conditions for quasi-parabolic Lie algebroid connections to exist.

\begin{proposition}\label{prop5}
Let $(V,\,\phi)$ be a Lie algebroid. For quasi-parabolic $(V,\,\phi)$-connections to exist, it is necessary that $\phi\,
:\,V\,\longrightarrow\, TX$ factors through $TX\otimes {\mathcal O}_X(-S)$.
\end{proposition}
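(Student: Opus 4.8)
The plan is to translate the statement into the two equivalent conditions already established earlier in the excerpt and then observe that the claim is essentially a restatement of Proposition \ref{prop1}(2). Recall that by Proposition \ref{prop1}, a Lie algebroid connection $D$ on $E$ gives a quasiparabolic $(V,\,\phi)$-connection on $E_*$ if and only if (i) the maps ${\mathcal S}_x$ preserve the quasiparabolic filtration for every $x\,\in\,S$, and (ii) for every $x\,\in\,S$ the fiber homomorphism $\phi^*_x\,:\,(K_X)_x\,\longrightarrow\,V^*_x$ is the zero map. Therefore, \emph{if} a quasiparabolic $(V,\,\phi)$-connection exists on some parabolic bundle $E_*$, condition (ii) must hold; the task is simply to unwind what (ii) says about $\phi$ itself.

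First I would fix an arbitrary parabolic point $x\,\in\,S$. The vanishing of the fiber map $\phi^*_x\,:\,(K_X)_x\,\longrightarrow\,V^*_x$ means that the morphism of sheaves $\phi^*\,:\,K_X\,\longrightarrow\,V^*$ vanishes at $x$, i.e.\ its image lies in the subsheaf $V^*\otimes\mathcal{O}_X(-x)$ locally at $x$. Dualizing, the vanishing of $\phi^*_x$ is equivalent to the vanishing of the original fiber map $\phi_x\,:\,V_x\,\longrightarrow\,(TX)_x$, since $\phi^*$ is the $\mathcal{O}_X$-dual of $\phi$ and fiberwise dualization is exact over a point. Collecting this over all $x\,\in\,S$, the anchor $\phi$ vanishes identically on the fibers over the divisor $S$, which is precisely the statement that $\phi$ factors through the subsheaf $TX\otimes\mathcal{O}_X(-S)\,\subset\,TX$: a global section of $\mathrm{Hom}(V,\,TX)$ vanishing at each reduced point of $S$ lifts uniquely to a section of $\mathrm{Hom}(V,\,TX\otimes\mathcal{O}_X(-S))$.

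The only genuinely subtle point is making sure that the necessity direction of Proposition \ref{prop1} is being invoked correctly: the hypothesis of the present proposition is merely that \emph{some} quasiparabolic $(V,\,\phi)$-connection exists (on some parabolic bundle), and this suffices because condition (ii) of Proposition \ref{prop1} is intrinsic to $(V,\,\phi)$ and does not reference the particular bundle $E_*$ or the particular connection $D$, as emphasized in Remark \ref{rmk:Qx1}. Thus existence of even one such connection forces $\phi^*_x\,=\,0$ for all $x\,\in\,S$. I would also note the alternative, perhaps cleaner, route through the $\Lambda$-module picture: by the correspondence preceding Definition \ref{def:parabolicLambdaModule}, a quasiparabolic $(V,\,\phi)$-connection is a map $\widetilde{D}\,:\,\Lambda_V^1\otimes_{\mathcal{O}_X}E\,\longrightarrow\,E$ preserving all the $\mathcal{E}_{x,i}$; applying this preservation to $\mathcal{E}_{x,\ell_x+1}\,=\,E\otimes\mathcal{O}_X(-x)$ and comparing with the Leibniz rule \eqref{e-4} again isolates the term $s\otimes\phi^*(df)$, whose required vanishing at $x$ yields $\phi^*_x\,=\,0$.

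The step I expect to require the most care is the dualization argument relating the vanishing of $\phi^*_x$ to the factorization of $\phi$: one must check that ``$\phi^*$ vanishes at each point of $S$'' and ``$\phi$ vanishes at each point of $S$'' are genuinely equivalent, and that pointwise vanishing over the \emph{reduced} divisor $S\,=\,\sum_{i=1}^m x_i$ is exactly the condition to factor through $TX\otimes\mathcal{O}_X(-S)$ rather than through some higher power of the ideal sheaf. Since $S$ is reduced and each $x_i$ appears with multiplicity one, no higher-order vanishing is involved, so this equivalence is clean; this is the same factorization already used in \eqref{e12} to produce $\widetilde{\phi}^*$, and I would simply cite that construction.
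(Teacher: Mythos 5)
Your proof is correct, but it takes a different route from the one the paper actually uses for Proposition \ref{prop5}. You reduce the claim to condition (2) of Proposition \ref{prop1}, note (correctly, and as the paper itself emphasizes in Remark \ref{rmk:Qx1}) that this condition is intrinsic to $(V,\phi)$, and then convert the pointwise vanishing of $\phi^*_x$ into the factorization of $\phi$ through $TX\otimes\mathcal{O}_X(-S)$ by fiberwise duality and reducedness of $S$ --- exactly the content of Remark \ref{rmk:Qx2} and the construction of $\widetilde{\phi}^*$ in \eqref{e12}. The paper instead gives a self-contained argument entirely in the $\Lambda$-module picture: it uses the bimodule relation $v\cdot f - f\cdot v = \phi(v)(f)$ in $\Lambda_V^1$ together with the requirement that $\widetilde{D}$ preserve $\mathcal{E}_{x,\ell_x+1}=E\otimes\mathcal{O}_X(-x)$ to force $\phi(v)(f)\in\mathcal{O}_X(-x)$ for all local sections $f$ of $\mathcal{O}_X(-x)$, i.e.\ that $\mathcal{O}_X(-x)$ be a Lie ideal in $\Lambda_V^1$; this is essentially the ``alternative route'' you sketch at the end of your proposal. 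Your main argument is more economical since Proposition \ref{prop1} is already proved, while the paper's version has the advantage of exhibiting the obstruction purely in terms of the algebra $\Lambda_V^1$, which is the viewpoint the rest of Section \ref{para_lambda_mod} is built on. The only point worth making explicit in your write-up is that the necessity direction of Proposition \ref{prop1}(2) requires $E\neq 0$ (its proof picks $v\in E_x\setminus E^{\ell_x+1}_x$), which is harmless here.
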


\begin{proof}
Assume that there exists a $(V,\,\phi)$-connection $\widetilde{D}\,:\,\Lambda_V^1\otimes_{\mathcal{O}_X} E \,\longrightarrow\,
E$ preserving a quasi-parabolic structure $E_*$ on $E$. In particular, it must satisfy, for each $x\,\in\,S$,
$$\widetilde{D}(\Lambda_V^1\otimes_{\mathcal{O}_X} E(-x)) \,\,\subseteq\,\, E(-x).$$

Let $v$ be a local section of $V$, and let $f$ be a local section of $\mathcal{O}_X(-x)$ while $s$ be
any local section of $E$.
Then, as $\widetilde{D}$ is a map of $\mathcal{O}_X$--bimodules, we must have
$$\widetilde{D}([v,\,f]\otimes s) \,\,=\,\, \widetilde{D}((v\cdot f - f\cdot v)\otimes s)\,\,=\,\, \widetilde{D}(v\otimes fs)-f\widetilde{D}(v\otimes s).$$
The first summand of the right hand side is the action of $v$ on $fs\,\in\, E(-S)$.
The homomorphism $\widetilde{D}$ preserves $E(-x)$ if and only if the
outcome also lies in $E(-x)$. The second summand of the right hand side also belongs to $E(-x)$, since $f$ is a local section of
$\mathcal{O}_X(-x)$. Thus, we have $\widetilde{D}([v,\,f]\otimes s)\,\in\, E(-x)$. However,
$$\widetilde{D}([v,\,f]\otimes s)\,\, =\,\, \widetilde{D}(\phi(v)(f) \otimes s)\,\,=\,\, \phi(v)(f) s$$
and the only way for this to belong to $E(-x)$ for each local section $s\,\in\, E$ is
to have $[v,f]=\phi(v)(f)\,\in\, \mathcal{O}_X(-x)$ for each $v$ and $f$. In other words, $\mathcal{O}_X(-x)$ must be a Lie ideal in $\Lambda^1_V$. As
this holds for all $f\,\in\, \mathcal{O}_X(-x)$, we must have $\phi(v)
\,\in\, TX(-x)$ and, therefore, $\phi$ must factor through $TX(-x)$ for each $x\,\in\, S$. This completes the proof.
\end{proof}

\begin{remark}
\label{rmk:Qx2}
Observe that $\phi$ factors through $TX(-S)$ if and only if the
homomorphism $\phi^*\,:\,K_X\,\longrightarrow\, V^*$ vanishes at
each $x\,\in\, S$. This precisely coincides with condition (2) of Proposition \ref{prop1}. Also observe
that, if this condition is satisfied, then the sheaf $\mathcal{Q}$ defined in \eqref{e2} always satisfies
$\mathcal{Q}_x\,=\,V_x^*$ for each $x\,\in \,S$.
\end{remark}

Moreover, the proof of Proposition \ref{prop5} also shows that if the anchor map $\phi$ factors through 
$TX(-S)$, then any action $\widetilde{D}\,:\,\Lambda_V^1\otimes E\,\longrightarrow \,E$ on a vector bundle 
$E$ must preserve the subsheaf $E(-S)$. As a consequence, the construction of the residue of a parabolic 
$\Lambda$--module at a parabolic point from \cite[\S~6]{Al} extends naturally to any action 
$\widetilde{D}\,:\,\Lambda_V^1\otimes_{\mathcal{O}_X} E\,\longrightarrow\, E$, even if there only exists an 
action of $\Lambda_V^1$ instead of the entire algebra $\Lambda_V$.

Let $x\,\in\, S$ be a parabolic point, and let $i_x\,:\,\{x\}\,\hookrightarrow\, X$ be the inclusion
map. Since $\widetilde{D}$ preserves $E(-x)$, we have the following commutative diagram of sheaves of
$(\mathcal{O}_X,\,\mathcal{O}_X)$--bimodules
\begin{eqnarray*}
\xymatrixcolsep{1.8pc}
\xymatrix{
& V \otimes_{\mathcal{O}_X} E(-x) \ar[r] \ar[d] & V\otimes_{\mathcal{O}_X} E \ar[r] \ar[d]^{\widetilde{D}} & V\otimes_{\mathcal{O}_X} (i_x)_*E_x \ar@{-->}[d] \ar[r] & 0\\
0 \ar[r] & E(-x) \ar[r] & E \ar[r]^-{\operatorname{ev}} & (i_x)_*E_x \ar[r] & 0
}
\end{eqnarray*}
inducing a morphism
$$\widetilde{D}_x \, : \,V\otimes_{\mathcal{O}_X} (i_x)_* E_x\, \longrightarrow\, (i_x)_*E_x.$$
Taking the pullback by $i_x$ yields a residue map
$$\widetilde{\mathcal{S}}_x\,:\, V_x\otimes E_x \,\longrightarrow\, E_x$$
and $\widetilde{D}$ is quasi-parabolic if and only if $\widetilde{\mathcal{S}}_x$ preserves the
filtration $\{E_x^i\}$ of $E$. Under the
identification of $\mathcal{Q}_x$ with $V_x^*$ (see Remark \ref{rmk:Qx1} and Remark \ref{rmk:Qx2}), this agrees with condition (1) of Proposition \ref{prop1}.

A quasi-parabolic connection is parabolic in the sense of Definition \ref{def:paraLieconn} if the induced map
$$\widetilde{\mathcal{S}}_x\,:\, V_x\otimes E_x^i/E_x^{i+1} \,\longrightarrow\, E_x^i/E_x^{i+1}$$
coincides with the map
\begin{equation}
\label{eq:residualCond}
v\otimes s \,\, \longmapsto \,\, \alpha_i^x \widetilde{\phi}_x (v) s
\end{equation}
for $v\,\in \,V_x$ and $s\,\in\, E_x^i / E_x^{i+1}$, where $\widetilde{\phi}_x$ is the restriction of $\phi$ at each point $x\,
\in\, S$ considered as a map to its image on $TX(-S)_x$ which, by Poincar\'e adjunction formula \cite[p.~146]{GH}, is isomorphic to $\mathbb{C}$:
$$\widetilde{\phi}_x \,\,:\,\, V \,\,\stackrel{\phi_x}{\longrightarrow}\,\, TX(-S)_x \cong \mathbb{C}.$$

The agreement between these two frameworks for studying parabolic Lie algebroid connections allows us to apply \cite[Theorem 5.8]{Al} to prove the existence of moduli spaces of semistable parabolic Lie algebroid connections, for the following natural notion of parabolic stability. A Lie algebroid connection $D\,:\,E\,\longrightarrow \,E\otimes V^*$ is called semistable if for each $F\,\subset\, E$ preserved by $D$ we have
$$
\frac{\deg(F)+\sum_{x\in S}\sum_{i=1}^{l_x} \alpha_i^x (\dim(E_x^i\cap F_x)-\dim(E_x^{i+1}\cap F_x))}{\operatorname{rk}(F)}
$$
$$
\le\, \frac{\deg(E)+\sum_{x\in S}\sum_{i=1}^{l_x} \alpha_i^x (\dim(E_x^i)-\dim(E_x^{i+1}))}{\operatorname{rk}(E)}.
$$

Observe that if $E_*$ is a semistable parabolic vector bundle, then $(E_*,D)$ is semistable for each parabolic Lie algebroid connection $D$ on $E_*$.

\begin{theorem}
\label{thm2}
Let $(V,\,\phi)$ be any Lie algebroid such that $\phi|_S\,=\,0$. For every system of weights
$\alpha\,=\,\{\{\alpha_i^x\}_{i=1}^{l_x}\}_{x\in S}$, every parabolic type $\overline{r}\,=\,
\{\{r_i^x\}_{i=1}^{l_x}\}_{x\in S}$ and every degree $d$, there exists a quasi-projective coarse moduli
space $\mathcal{M}_{(V,\phi)}(\alpha,\,\overline{r},\,d)$ of semistable integrable parabolic
$(V,\,\phi)$--connections $(E_*,\,D)$ on
$(X,\,S)$, with $D\,:\,E\,\longrightarrow\, E\otimes V^*$, $\deg(E)\,=\,d$,
$\dim(E_x^i/E_x^{i+1})\,=\,r_x^i$ and system of weights $\alpha$.

If $V$ is a line bundle, then this moduli space is nonempty if and only if either
\begin{itemize}
\item $(V,\,\phi)\,\,\ne \,\,(TX(-S),\, i\,:\,TX(-S)\,\hookrightarrow\, TX)$, or

\item $(V,\,\phi)\,\,=\,\,(TX(-S),\,i\,:\,TX(-S)\,\hookrightarrow\, TX)$ and
$$d+\sum_{x\in S} \sum_{i=1}^{l_x} \alpha_i^x r_i^x \,=\, 0.$$
\end{itemize}
If $V$ is a stable bundle, then for the moduli space to be nonempty it is sufficient to have either
\begin{itemize}
\item $\operatorname{im} \phi \,\ne\, TX(-S)$, or
\item $\operatorname{im} \phi \,=\, TX(-S)$ and 
$$d+\sum_{x\in S} \sum_{i=1}^{l_x} \alpha_i^x r_i^x \,=\, 0.$$
\end{itemize}
\end{theorem}

\begin{proof}
The existence of a quasi-projective coarse moduli space $\mathcal{M}$ of quasi-parabolic Lie 
algebroid connections follows directly from the previous discussion on the equivalence between 
the quasi-parabolic $(V,\,\phi)$-connections and the parabolic $\Lambda_V$-modules by applying 
\cite[Theorem 5.8]{Al}. The condition \eqref{eq:residualCond} of being a parabolic connection 
can be rephrased as a set of residual conditions in the sense of \cite[Definition 6.2]{Al} as 
follows. Let $\{v_1^x,\, \cdots ,\, v_r^x\}$ be a basis of $V_x$ for each $x\,\in\, S$. For 
each $j\,=\,1,\,\cdots,\,r$, let $\overline{R}_j$ be the following set of sections of 
$\Lambda_V|_x$
$$
\overline{R}_j=\left\{(-\alpha_i^x \widetilde{\phi}_x (v_j^x), v_j^x) \,\in\, \mathcal{O}_X|_x 
\oplus V_x \,\subset\, \Lambda_V|_x\right \}_{x\in S,\, i=1,\cdots, l_x}
$$
Then $\overline{R}_j$ constitutes a residual condition for $\Lambda_V$ over the parabolic 
points $S$ and a quasi-parabolic connection $(E_*,D)$ is $\overline{R}_j$ --residual in the 
sense of \cite[Definition 6.2]{Al} for each $j\,=\,1,\,\cdots,\,r$ if and only if 
\eqref{eq:residualCond} is satisfied for every $v_j^x\,\in\, V_x$. Since 
\eqref{eq:residualCond} is linear in $V$ and $\{v_1^x,\,\cdots,\, v_r^x\}$ is a basis of $V$, 
this is equivalent to have \eqref{eq:residualCond} for each $v\,\in\, V$ and, therefore, to 
have a parabolic Lie algebroid connection. By \cite[Theorem 6.3]{Al}, for each $j$, 
$\overline{R}_j$--residual parabolic $\Lambda_V$ connections form a closed subscheme 
$\mathcal{M}_j$ of the moduli space $\mathcal{M}$ of quasi-parabolic Lie algebroid 
connections. Thus, the intersection $\mathcal{M}_{(V,\phi)}(\alpha,\,\overline{r},\,d) 
\,:=\,\cap_j \mathcal{M}_j$ is a closed subscheme of $\mathcal{M}$ corepresenting the moduli 
of semistable integrable parabolic $(V,\,\phi)$--connections.

Let us analyze the non-emptiness conditions. Assume first that $V$ is a line bundle. First of all, observe that if $V$ is a line bundle, then each parabolic Lie algebroid connection $D:E\longrightarrow E\otimes V^*$ is automatically integrable, as we have $\wedge^2 V = 0$ and, therefore, the map
$$D\circ D \,:\, E\,\longrightarrow \,E\otimes \wedge^2 V^*$$
is always zero. Thus, the moduli space is nonempty if and only if there exists a semistable parabolic Lie algebroid connection.

For any curve $X$, the moduli space of semistable parabolic bundles on $X$ with degree $d$, parabolic 
system of weights $\alpha$ and parabolic type $\overline{r}$ is nonempty and, therefore, there exists at least 
one parabolically polystable vector bundle $E_*$ on $(X,\,S)$ for the system of weights $\alpha$. 

First suppose that $(V,\,\phi)\,\,\ne\,\, (TX(-S),\,i\,:\,TX(-S)\,\hookrightarrow\, TX)$. Then, by
Proposition \ref{prop3}, the parabolic vector bundle
$E_*$ admits a parabolic Lie algebroid connection $D\,:\, E\,\longrightarrow\, E\otimes V^*$. As $E_*$ 
is polystable, it is semistable and, therefore, $(E_*,\,D)$ is semistable, so it yields a point in 
$\mathcal{M}_{(V,\phi)}(\alpha,\,\overline{r},\,d)$.

Next assume that $(V,\,\phi)\,\,=\,\, (TX(-S),\,i\,:\,TX(-S)\,\hookrightarrow\, TX)$ and that 
$$d+\sum_{x\in S} \sum_{i=1}^{l_x} \alpha_i r_i \,\,=\,\, 0.$$ As $E_*$ was taken to be 
polystable, any direct summand of $E_*$ must have the same parabolic slope as that of $E_*$. 
Let $r$ be the rank of $E$. Since the parabolic vector bundle $E_*$ has parabolic slope
$$\frac{d+\sum_{x\in S} \sum_{i=1}^{l_x} \alpha_i^x r_i^x}{r} \ = \ 0,$$
each direct summand of $E_*$ must have parabolic slope $0$, and, therefore, parabolic degree $0$. Thus, $E_*$
satisfies the conditions of part (2) of Proposition \ref{prop3} and, hence there exists a parabolic Lie
algebroid connection on $E_*$, which is, again due to the semistability of $E_*$, a semistable parabolic
Lie algebroid connection.

To show the converse, let us suppose that $(E_*,\,D)$ is a semistable parabolic Lie algebroid for
the Lie algebroid $(TX(-S),\,i\,:\,TX(-S)\,\hookrightarrow\, TX)$. Then by \cite{BL}, each direct
summand of
$E_*$ must have parabolic degree $0$, and therefore, the parabolic degree of $E_*$ must be zero, so
$$d+\sum_{x\in S} \sum_{i=1}^{l_x} \alpha_i^x r_i^x \,=\, 0.$$
This completes the proof for line bundles. Now, assume that $V$ is stable and $\operatorname{rk}(V)\ge 2$. Let
$L\,=\,\operatorname{im} \phi \subseteq TX(-S)$. If $L\,=\,0$, then a Lie algebroid connection for $(V,\phi)$ is simply an
$\mathcal{O}_X$--linear map $D\,:\,E\,\longrightarrow\, E\otimes V^*$. Thus, the map $D=0$ then constitutes a parabolic
integrable Lie algebroid connection on $E_*$. Suppose that $L\ne 0$. Then $L$ is a line bundle. The previous argument on the
line bundle case and the hypothesis
of the theorem imply that the polystable bundle $E_*$ admits a $(L,\,i:L\hookrightarrow TX(-S))$--parabolic connection
$$\nabla\, : \, E\,\longrightarrow\, E\otimes L^*.$$
By composition with $\operatorname{id}_E\otimes \phi^*$, we obtain then a quasi-parabolic Lie algebroid connection
$$D\, : \, E\,\longrightarrow\, E \otimes L^* \,\stackrel{\operatorname{id}\otimes \phi^*}{\longrightarrow}\, E\otimes V^*.$$
If we call $D_v\,:\, E\,\longrightarrow\, E$ the induced map by contracting with $v\,\in\, V$, then we have, by construction
$$D_v\, =\,\nabla_{\phi(v)}.$$
Observe that $D$ is a Lie algebroid connection for $(V,\phi)$, as, for each local sections $v$ in $V$ and$s$ in $E$ and each local holomorphic function $f$ we have
$$D_v(fs) \, =\,\nabla_{\phi(v)}(fs)\, =\, f\nabla_{\phi(v)}(s)+s\otimes \phi(v)(f) \, =\, fD_v(s)+s\otimes \phi(v)(f).$$
As $\nabla$ is parabolic, it satisfies \eqref{eq:residualCond} and, by construction, $D$ clearly also satisfies \eqref{eq:residualCond}, so it is also parabolic.
On the other hand, since $L\subset T_X(-S)$ is a line bundle, then $\nabla$ is integrable. Therefore, for each pair of local sections $u$ and $v$ of $V$ we have the following.
$$D_{[u,v]}\, =\, \nabla_{\phi([u,v])}\, =\, \nabla_{[\phi(u),\phi(v)]} \, =\,[\nabla_{\phi(u)},\,\nabla_{\phi(v)}] \, =\,[D_u,\,D_v]$$
so $D$ is integrable. As before, since $E_*$ is polystable, then $(E_*,D)$ is semistable and, therefore, it represents a point in the moduli space.
\end{proof}

\section*{Acknowledgments}
 
We thank the referee for a helpful suggestion.
D.A. was supported by grants PID2022-142024NB-I00 and RED2022-134463-T funded by 
MCIN/AEI/10.13039/501100011033. A.S. is partially supported by SERB SRG Grant SRG/2023/001006.
I.B. is partially supported by a J. C. Bose Fellowship (JBR/2023/000003).


\begin{thebibliography}{ZZZZ}

\bibitem[Al]{Al} D. Alfaya, Moduli space of parabolic $\Lambda$-modules over a curve, 
arXiv:1710.02080.

\bibitem[AO]{AO} D. Alfaya and A. Oliveira, Lie algebroid connections, twisted Higgs bundles
and motives of moduli spaces, {\it J. Geom. Phys.} {\bf 201} (2024), 105195.

\bibitem[At]{At} M. F. Atiyah, Complex analytic connections in fibre bundles, {\it
Trans. Amer. Math. Soc.} {\bf 85} (1957), 181--207.

\bibitem[BKS]{BKS} I. Biswas, P. Kumar and A. Singh, A criterion for Lie algebroid connections
on a compact Riemann surface, to appear in {\it Geom. Dedicata},
{\bf 218}, No. 4, (2024), Paper No. 87.

\bibitem[BL]{BL} I. Biswas and M. Logares, Connections on parabolic vector bundles over curves, {\it 
Internat. J. Math.} {\bf 22} {(4)} (2011) 593--602.

\bibitem[Bo]{Bo} P. Boalch, Quasi-Hamiltonian geometry of meromorphic connections,
{\em Duke Math. Jour.} {\bf 139} (2002), 369--405.

\bibitem[BS]{BS} C.~L. Bremer and D.~S. Sage, Moduli spaces of irregular singular
connections, {\em Int. Math. Res. Not.} {\bf 2013} 1800--1872.

\bibitem[CM]{CM} J. Cort{\'{e}}s and E. Mart{\'{\i}}nez,
Mechanical control systems on Lie algebroids,
{\em {IMA} Jour. Math. Control Information} {\bf 21} (2004), 457--492.

\bibitem[De]{De} P.~Deligne, {\em \'{E}quations diff\'{e}rentielles \`a points singuliers
r\'{e}guliers}, Lecture Notes in Mathematics, Vol. 163. Springer-Verlag, Berlin-New York, 1970.

\bibitem[ELW]{ELW} S. Evens, J.-H. Lu and A. Weinstein, Transverse measures, the modular
class and a cohomology pairing for Lie algebroids, {\it Quart. Jour. Math.} {\bf 50}
(1999), 417--436.

\bibitem[GH]{GH} P. Griffiths and J. Harris, {\it Principles of algebraic geometry},
Pure and Applied Mathematics, Wiley-Interscience, New York, 1978.

\bibitem[Hi]{Hi} N.~J. Hitchin, The self-duality equations on a {Riemann} surface,
{\em Proc. London Math. Soc.} {\bf 55} (1987), 59--126.

\bibitem[LM]{LM} S. Lazzarini and T. Masson, Connections on Lie algebroids and on
derivation-based noncommutative geometry, {\em Jour. Geom. Phy.} {\bf 62} (2012), 387--402.

\bibitem[MY]{MY} M. Maruyama and K. Yokogawa, Moduli of parabolic stable
sheaves, \textit{Math. Ann.} \textbf{293} (1992) 77--99.

\bibitem[MS]{MS} V. B. Mehta and C. S. Seshadri, Moduli of vector bundles on curves with
parabolic structures, {\it Math. Ann.} \textbf{248} (1980), 205--239.

\bibitem[Ni1]{Ni} N.~Nitsure, Moduli space of semistable pairs on a curve,
{\em Proc. London Math. Soc.} {\bf 62} (1991), 275--300.

\bibitem[Ni2]{Ni2} N.~Nitsure, Moduli of semistable logarithmic connections,
{\em Jour. Amer. Math. Soc.} {\bf 6} (1993), 597--609.

\bibitem[PP]{PP} \'{A}. del Pino and F. Presas, The foliated Weinstein conjecture,
{\it Int. Math. Res. Not.} \textbf{2018} 5148--5177.

\bibitem[PW]{PW} \'{A}. del Pino and A. Witte, Regularisation of Lie algebroids
and applications, {\it Jour. Geom. Phys.} {\bf 194} (2023), 105023.

\bibitem[Si1]{Si1} C.~T. Simpson, Harmonic Bundles on Noncompact Curves,
{\it Jour. Amer. Math. Soc.} {\bf 3} (1990), 713--770.

\bibitem[Si2]{Si2} C.~T. Simpson, Higgs bundles and local systems,
{\it Inst. Hautes \'Etudes Sci. Publ. Math.} {\bf 75} (1992), 5--95.

\bibitem[Si3]{Si3} C. T. Simpson, Moduli of representations of the fundamental group of a
smooth projective variety I, {\it Inst. Hautes \'Etudes Sci. Publ. Math.} {\bf 79} (1994), 47--129.

\bibitem[To1]{To1} P. Tortella, {\it {$\Lambda$}-modules and holomorphic Lie algebroids}, PhD thesis,
Scuola Internazionale Superiore di Studi Avanzati (2011).

\bibitem[To2]{To2} P. Tortella, $\Lambda$-modules and holomorphic Lie algebroid connections,
{\it Cent. Eur. J. Math.} {\bf 10} (2012), 1422--1441.

\end{thebibliography}
\end{document}